\newcommand{\qed}{$\;\;\;\Box$}
\newenvironment{proof}{\par\smallbreak{\sl\bf Proof.~}}
{\unskip\nobreak\hfill \qed \par\medbreak}
\newcounter{claim}
\renewcommand{\theclaim}{\arabic{claim}}
{\par\medskip\par}
\newcommand{\N}{{\mathbb N}}
\newcommand{\R}{{\mathbb R}}
\newcommand{\LL}{{\cal L}}
\newcommand{\beq}{\begin{equation}}
\newcommand{\ee}{\end{equation}}
\renewcommand{\d}{\partial}
\newtheorem{thm}{Theorem}[section]
\newtheorem{lem}[thm]{Lemma}
\newtheorem{defn}[thm]{Definition}
\newtheorem{cor}[thm]{Corollary}
\newtheorem{rem}[thm]{Remark}
\newtheorem{ex}[thm]{Example}
\newcommand{\al}{\alpha}
\newcommand{\be}{\beta}
\newcommand{\ga}{\gamma}
\newcommand{\eps}{\varepsilon}
\newcommand{\vphi}{\varphi}
\newcommand{\la}{\lambda}
\newcommand{\om}{\omega}
\newcommand{\io}{\iota}
\newcommand{\reff}[1]{(\ref{#1})}      
\newcommand{\diag}{\mathop{\rm diag}\nolimits}
\date{}
\title{
Perturbations of  superstable linear hyperbolic systems}
\newcounter{thesame}
\author{
I.~Kmit
\thanks{Institute of Mathematics, Humboldt University of Berlin. On leave from the
Institute for Applied Problems of Mechanics and Mathematics,
Ukrainian National Academy of Sciences. {\small   E-mail:
{\tt kmit@mathematik.hu-berlin.de}}}
\ \ \ N.~Lyul'ko \thanks{Sobolev Institute of Mathematics, Russian Academy of Sciences and
Novosibirsk State University, Russia.
{\small   E-mail:
{\tt natlyl@mail.ru}}
}}
\begin{document}

\maketitle

\begin{abstract}
\noindent 
The paper deals with  initial-boundary value problems
 for linear non-autonomous first order hyperbolic systems
whose solutions  stabilize to zero in a finite time.
We prove that problems in this class remain 
 exponentially stable in $L^2$ as well as in $C^1$ under  small bounded perturbations. 
To show this for $C^1$, we prove a general smoothing result implying that
the solutions to the perturbed problems
 become eventually $C^1$-smooth for any $L^2$-initial data.
\end{abstract}

\emph{Key words:} 
 first order hyperbolic systems, smoothing boundary conditions,
superstability,
exponential  stability,  bounded perturbations, evolution family

\emph{Mathematics Subject Classification: 35B20, 35B35, 35B40, 35B45,   35B65,  35L50}

\section{Introduction}\label{sec:intr} 

A linear system
\begin{equation}\label{ss0}
\frac{d}{dt}x(t)=A(t)x(t), \quad x(t)\in X \quad (0\le t \le \infty),
\end{equation}
on a Banach space $X$
is called  \textit{exponentially stable} if there exist positive reals
$\ga$ and $M=M(\gamma)$ such that every solution $x(t)$ 
satisfies the estimate
\beq\label{cor1}
\|x(t)\|\le M e^{-\gamma t} \|x(0)\|, \quad t\ge 0,
\ee
where  $\|\cdot\|$ denotes the norm in $X$. 
%The real $\ga$ is referred to es \emph{stability index}.

The papers  \cite{bal99,bal105,cr13} address a stronger property of exponentially stable systems, known as superstability. They consider the Cauchy problem for the  {\it autonomous} version of \reff{ss0},
where $A(t)=A$ does not depend on $t$. Moreover,
$A: X \rightarrow X$ is supposed to be  the infinitesimal generator of a strongly continuous semigroup  $T(t)$; see \cite{hp57,Paz}. 
A semigroup $T(t)$ is called {\it superstable}  \cite{bal99,bal105,lum01,rw95} 
if its {\it stability index}
is $-\infty$, that is
$$
\lim_{t\to \infty}\frac{\log\|T(t)\|}{t}=-\infty.
$$
In this case
the system (\ref{ss0}) is called  superstable also.
The superstability property 
implies that the system is exponentially stable and, moreover, the estimate \reff{cor1}
holds for every $\gamma> 0$. 
For a  superstable system the resolvent $R(\lambda;A) $ of the operator $A$, which is
defined by the formula
$$
R(\lambda; A)x=\int_0^\infty\,e^{-\lambda t} T(t)x\,dt,\quad x\in X,
$$
is an entire function of the complex parameter $\lambda$, and the spectrum of the operator 
$A$, which we denote by $\sigma(A)$, is empty. 
The superstability  property makes sense only for
systems in infinite-dimensional Banach spaces, since  any linear operator $A: X \rightarrow X $ in a finite-dimensional space $X$ 
has a non-empty point spectrum, and the stability index is equal to
 the maximum of the real parts 
of the eigenvalues of $A$.
 
An important subclass of  superstable systems, that will be studied in the present paper,
 consists of the systems
whose solutions  stabilize to zero 
after some  time. The time of the stabilization is called  a {\it finite time extinction}.
The simplest example is given by the initial-boundary value problem \cite{hp57}
$$
\begin{array}{ll}
u_t+u_x=0,& (x,t)\in (0,1)\times(0,\infty),\\
u(x,0)=u_0(x),& x\in [0,1],\\
u(0,t)=0,& t\in(0,\infty).
\end{array}
$$
It is easy to check that all solutions to this problem stabilize to zero for $t>1$. 
Similar  examples for the wave equation  are given in \cite{Cox,Kom,Majda}.

Here we address superstable initial-boundary value problems with finite time extinction for linear 
{\it non-autonomous} hyperbolic systems. 
We consider bounded perturbations of such problems and investigate
 the asymptotic behavior of their solutions. In contrast to the autonomous case, which is 
well-studied, the non-autonomous case has been considered in the literature only episodically.

The recent papers \cite{lakra,pavel,
perroll,slx13,udw05,udw12} are devoted to superstable hyperbolic models 
intensively used
 in the control theory. By  introducing control parameters in the boundary conditions 
and/or 
in the  coefficients of the differential equations, such
  systems can be stabilized to a desired state in a finite time, 
which,  from the physical  point of view, 
is even more preferable than the infinite time stabilization.
Superstable hyperbolic systems are usually supplemented with the so-called 
quiet boundaries \cite{udw05}, where
the influence of the reflected waves is minimized or even neglected. Mathematically, 
 the quiet boundaries are described by means of the so-called {\it smoothing boundary
conditions}, that also will be considered in the present paper.

The paper is organized as follows. 
 In Section \ref{sec:statem}  we state the problem and formulate our results
about the existence of evolution families, smoothing properties of solutions, 
and exponential stability
 of solutions.
Some comments and examples related to  applications  
are  given in Section \ref{sec:exam}.
In particular, Example \ref{exx1} shows  how our results obtained for linear systems can be applied to
show the exponential stability of solutions to nonlinear problems.
 Using a priori estimates, in Section \ref{sec:abstr} we prove that the problem
under consideration generates an
evolution family on  $L^2(0,1)^n$.
In Section \ref{sec:smoothing} 
we extend the results of  \cite{Elt,kmit,Km,LavLyu}  by 
 showing that  boundary operators of reflection  type
  cause the smoothing effect in the sense that  the solutions reach the $C^1$-regularity
 for any $L^2$-initial data after some time. Furthermore, we provide general conditions  for that.
We also discuss  the relationship
between the smoothing property and the stabilization to zero.
Finally, in Section \ref{sec:perturb},
using the  variation of constants formula,
 we prove that superstable hyperbolic operators remain exponentially stable
 under small  bounded 
perturbations.

\paragraph{Previous work}
Our stability analysis  shows that the exponential decay rate $\ga$ of solutions to the perturbed 
problems can be arbitrary large provided
the perturbations are sufficiently small. Similar questions are addressed for the  wave equation perturbed by  
velocity damping, see e.g.  \cite{gugat,gugat1} and references therein.

In \cite{hp57} the authors  investigate a relationship between the asymptotic behavior 
of the spectrum and the resolvent of an operator $A$ and the stability 
of the corresponding semigroup. 
In \cite{kr},  properties of the resolvent $R(\lambda;A)$ are used to  establish
a criterion that a  strongly continuous semigroup $T(t)$ is eventually vanishing or, the same, is
nilpotent \cite{cr13}. Stability of nilpotent
semigroups is investigated in \cite{chen}.
Properties of the strongly continuous semigroup $T(t)$ generated by $A$ are used in \cite{cr13} for
 giving criteria that the autonomous system \reff{ss0} is asymptotically stable, 
superstable, or stabilizes 
to zero in a finite time.

In  \cite{Elt} the second author considers  initial-boundary value problems of the type \reff{ss0} for 
 autonomous strictly hyperbolic systems. Analyzing the resolvent of $A$,
she describes a class of boundary conditions for which solutions 
eventually stabilize to zero. The condition  $\sigma(A)=\emptyset$ gives
 a criterion for stabilization for a decoupled hyperbolic system. It is also shown that the 
stabilization property is closely related to  increasing  smoothness of solutions to the 
perturbed autonomous problems. 
 In the  autonomous  strictly hyperbolic case  the smoothing effect is addressed
 in \cite{Elt,LavLyu,Lyu}, while in the  non-autonomous weakly hyperbolic case it is investigated  in \cite{kmit,Km}.

A comprehensive review of the available results on asymptotic
 behavior of solutions to linear and quasi-linear hyperbolic problems can be 
found in \cite{bastin,Coron,gugat1}.

\section{Problem setting and the main results}\label{sec:statem} 
\renewcommand{\theequation}{{\thesection}.\arabic{equation}}
\setcounter{equation}{0}

\paragraph{Notation} Set 
 $\Pi=\{(x,t):\,0< x< 1,\, t\in\R\}$.
Let $BC(\Pi)$ (respectively, $BC(\R)$) denote the Banach space of all bounded and continuous 
maps  $u: \overline\Pi\to \R$   (respectively, $u: \R\to \R$), with the norm 
$$
\|u\|_\infty=\sup_{(x,t)\in\Pi}|u| \quad  (\mbox{respectively, }\|u\|_\infty=\sup_{t\in\R}|u|).
$$
Furthermore, let   $BC^1(\Pi)$ denote the Banach space of all  $u\in BC(\Pi)$ 
such that $\d_xu\in BC(\Pi)$ 
and  $\d_tu\in BC(\Pi)$,    with the norm 
$$
\|u\|_1=\|u\|_\infty+\|\d_xu\|_\infty+\|\d_tu\|_\infty.
$$  
 
If $X$ is a Banach space, 
then the $n$-th Cartesian power $X^n$ is considered  a Banach space with the  norm
$$
\|u\|_{X^n}= \max_{i\le n} \|u_i\|_{X}.
$$ 

As usual, by $\LL(X,Y)$ we denote the space of linear
bounded operators from a Banach space $X$ into a Banach space $Y$, and write $\LL(X)$ for $\LL(X,X)$.

\paragraph{Problem setting}
In the strip $\Pi$ we consider the following decoupled non-autonomous 
hyperbolic system:
\begin{equation}\label{ss2}
\partial_tu  + a(x,t)\partial_xu + b_d(x,t)u = 0, \quad (x,t)\in \Pi,
\end{equation}
and its perturbed version
\begin{equation}\label{ss7}
\partial_tu  + a(x,t)\partial_xu + (b_d(x,t)+\tilde{b}(x,t))u = 0, \quad (x,t)\in \Pi,
\end{equation}
where $n\ge 2$, $u=(u_1,\ldots,u_n)$ is  a vector of real-valued functions,
$a=\diag(a_1,\dots,a_n)$ and $ b_d=\diag(b_1,\dots,b_n)$ are diagonal matrices,
and $\tilde{b}$ is an  $(n\times n)$-matrix with entries $\tilde b_{jk}$.
The systems \reff{ss2} and \reff{ss7} will be endowed with 
the reflection boundary conditions 
\beq\label{ss5}
\begin{array}{l}
\displaystyle
u_j(0,t) = 
\sum\limits_{k=1}^mp_{jk}u_k(1,t) + \sum\limits_{k=m+1}^np_{jk}u_k(0,t),
\quad  1\le j\le m,
\nonumber\\
\displaystyle
u_j(1,t) = \sum\limits_{k=1}^mp_{jk}u_k(1,t)+ \sum\limits_{k=m+1}^np_{jk}u_k(0,t),
\quad   m< j\le n,
\nonumber
\end{array}
\ee
where the integer $m$ is fixed in the range $0\le m \le n$ and the reflection coefficients $p_{ij}$ are real constants.

Suppose that 
\begin{equation}\label{ss44}
a_j, b_{j}, \tilde b_{jk}\in BC^1(\Pi)   \mbox{ for all }
j\le n
\end{equation}
 and
\begin{equation}\label{ss4}
\inf_{(x,t)\in \overline\Pi}a_j\ge \Lambda_0 \mbox{ for all } j\le m\quad 
\mbox{ and }\quad 
\sup_{(x,t)\in \overline\Pi}a_j\le -\Lambda_0 \mbox{ for all } j>m
\end{equation}
for some $\Lambda_0>0$. Condition (\ref{ss4}) ensures that
the hyperbolic systems (\ref{ss2}) and \reff{ss7}
are  non-degenerate and that all their characteristics are uniformly bounded
in  $\overline\Pi$. 

Given $\tau\in\R$, we also consider  the initial conditions at $t=\tau$, namely
\beq\label{ss11}
u(x,\tau) = \vphi(x), \quad x\in[0,1].
\ee
Set 
$$
\Pi_\tau=\{(x,t): 0< x< 1,\, t>\tau\}.
$$

   We are interested in the long time behavior of solutions
to the perturbed problem (\ref{ss7}), (\ref{ss11}), (\ref{ss5}) in $\Pi_\tau$,
under the assumption that solutions to the unperturbed problem (\ref{ss2}), (\ref{ss11}), (\ref{ss5}) in $\Pi_\tau$
stabilize to zero in a finite time. 
We will formulate our results in terms of evolution families  generated by the unperturbed 
and the perturbed problems; this concept will be introduced below.
In fact, we will prove the existence of an evolution family for the general linear first order 
hyperbolic system
 \begin{equation}\label{ss10}
\partial_tu  + a(x,t)\partial_xu + b(x,t)u = 0, \quad (x,t)\in \Pi,
\end{equation}
subjected to the boundary conditions  (\ref{ss5}). We assume that  $b$ is an  $(n\times n)$-matrix with entries
\begin{equation}\label{ss8}
b_{ij}\in BC^1(\Pi)\quad \mbox{ for all } i,\,j\le n.\end{equation}
The well-posedness
of the problem  (\ref{ss10}), (\ref{ss11}), (\ref{ss5}) in $\Pi_\tau$ in the spaces of continuous and 
continuously differentiable functions is investigated in   \cite{Aboliny,ijdsde}.

\paragraph{Characteristics and integral representation}

 For given $j\le n$,  $x \in [0,1]$, and $t \in \R$, the $j$-th characteristic of \reff{ss10}
passing through the point $(x,t)$ is defined
as the solution $\xi\in [0,1] \mapsto \om_j(\xi,x,t)\in \R$ to the initial value problem
\beq\label{char}
\partial_\xi\om_j(\xi,x,t)=\frac{1}{a_j(\xi,\om_j(\xi,x,t))},\;\;
\om_j(x,x,t)=t.
\ee
The assumption  (\ref{ss4}) implies that, if $(x,t)\in\overline\Pi_\tau$, then
 the characteristic curve $\theta=\om_j(\xi,x,t)$ 
reaches the
boundary of $\Pi_\tau$ in two points with distinct ordinates. Let $x_j(x,t)$
denote the abscissa of that point whose ordinate is smaller. 
The condition (\ref{ss4}) ensures that  the value of  $x_j(x,t)$ 
does not depend on $x,t$ if $t>\tau+\frac{1}{\Lambda_0}$. Therefore, 
$ x_j(x,t)$ in this range takes on the constant value
\beq\label{*k}
x_j=\left\{
 \begin{array}{rl}
 0 &\mbox{if}\ 1\le j\le m,\\
 1 &\mbox{if}\ m<j\le n.
\end{array}
\right.
\ee
Let
$$c_j(\xi,x,t)=\exp \int_x^\xi
\left(\frac{b_{jj}}{a_{j}}\right)(\eta,\om_j(\eta,x,t))\,d\eta,\quad
d_j(\xi,x,t)=\frac{c_j(\xi,x,t)}{a_j(\xi,\om_j(\xi,x,t))}.
$$
We define  a linear bounded operator
 $P\in\LL\left(BC(\Pi)^n, BC(\R)^n\right)$ by
\beq\label{ss12}
\displaystyle
\left(Pu\right)_j(t) = \sum\limits_{k=1}^mp_{jk}u_k(1,t)+\sum\limits_{k=m+1}^np_{jk}u_k(0,t),
\quad   j\le n.
\ee

Straightforward calculations  show that a $C^1$-map $u:\overline{\Pi}_\tau \to \R^n$
is a solution to 
the problem (\ref{ss10}), (\ref{ss11}), (\ref{ss5})  if and only if
it satisfies the following system of integral equations
\begin{eqnarray}
\label{rep1}
\lefteqn{
u_j(x,t)=
\left(Qu\right)_j(x,t)
}\nonumber\\
&&-\int_{x_j(x,t)}^x d_j(\xi,x,t)\sum_{k\not=j} b_{jk}(\xi,\om_j(\xi,x,t))u_k(\xi,\om_j(\xi,x,t))d\xi,
\quad j\le n,
\end{eqnarray}
where the affine operator $Q$   is defined by
\begin{eqnarray}
\label{Q}
(Qu)_j(x,t)=
\left\{\begin{array}{lcl}
\displaystyle c_j(x_j(x,t),x,t) \left(Pu\right)_j(\om_j(x_j(x,t),x,t))
& \mbox{if}&  x_j(x,t)=0 \mbox{ or } x_j(x,t)=1 \\
c_j(x_j(x,t),x,t)\vphi_j(x_j(x,t))      & \mbox{if}&  x_j(x,t)\in(0,1),
\end{array}\right.
\end{eqnarray}
on a subset of  $C(\overline\Pi_\tau)^n$ of functions  satisfying the initial condition
(\ref{ss11}).
A continuous function $u$ satisfying  \reff{rep1} in $\overline{\Pi}_\tau$ 
is called a {\it continuous solution} to (\ref{ss10}), (\ref{ss11}), (\ref{ss5}).

\paragraph{Our results}
In Section 3 we prove that the problem (\ref{ss10}), (\ref{ss11}), (\ref{ss5}), where
$\tau\in\R$ is  an arbitrary fixed initial time,
generates an evolution family
$\{U(t,\tau)\}_{t\ge\tau}$
 mapping an initial function $\vphi$ given at time $\tau$ into the solution
$U(t,\tau)\varphi$ of the problem  (\ref{ss10}), (\ref{ss11}), (\ref{ss5})
at time~$t$. We now define this concept formally.

\begin{defn}\label{def_evolfam} \cite{Paz}
A two-parameter family $\{U(t,\tau)\}_{t\ge\tau}$ 
of linear bounded operators on a Banach space $X$
is called an {\rm evolution family} 
if it satisfies the following properties:
\begin{itemize}
\item
$U(\tau,\tau) =I$ and $U(t,s)U(s,\tau) =  U(t,\tau)$ for all
$t \ge s \ge\tau$;
\item
the map $(t,\tau)\in\R^2\rightarrow U(t,\tau)\in\LL(X)$ is strongly continuous for all
$t\ge \tau$.
\end{itemize}
\end{defn}

\begin{defn}\label{def_eo}   \cite{Paz}
An  evolution  family  $\{U(t,\tau)\}_{t\ge\tau}$   on a Banach space $X$
 is called {\rm exponentially bounded}
if there exist $\om\in\R$ and $M=M(\om)\ge 1$ such that
$$
\|U(t,\tau)\|_{\LL(X)}\le Me^{\om(t-\tau)} \mbox{ for all } t\ge\tau.
$$
An evolution family is
{\rm uniformly exponentially stable}
if  $\om<0$.
\end{defn}

We are now prepared to state our first result.

\begin{thm}\label{evol}
Suppose that the coefficients in the system (\ref{ss10}) fulfill the
conditions (\ref{ss44}), (\ref{ss4}), and (\ref{ss8}). Then the problem (\ref{ss10}), (\ref{ss5})
generates an exponentially bounded evolution 
family $\{U(t,\tau)\}_{t\ge\tau}$ on the space $L^2(0,1)^n$. 
 \end{thm}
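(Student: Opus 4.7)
The plan is to construct the evolution family in three steps: obtain the solution map on a dense smooth subspace, derive a uniform $L^2$-bound via a weighted energy, and extend by density. For the first step, given $\vphi\in C^1([0,1])^n$ satisfying the zeroth- and first-order compatibility relations forced by (\ref{ss5}) at $t=\tau$, I invoke the $C^1$-well-posedness of \cite{Aboliny,ijdsde} to obtain a unique continuous solution $u$ of (\ref{ss10}), (\ref{ss11}), (\ref{ss5}) on $\overline{\Pi}_\tau$; alternatively, a Banach fixed-point argument may be applied to the affine integral equation (\ref{rep1}) on $C(\overline{\Pi}_{\tau,\tau+T})^n$ for small $T>0$ and iterated in time. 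Set $U(t,\tau)\vphi:=u(\cdot,t)$. The subspace $C_c^\infty(0,1)^n$ lies in the domain of $U(t,\tau)$ (all relevant compatibility conditions hold trivially for functions vanishing near the corners) and is dense in $L^2(0,1)^n$.

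The core of the proof is to establish
\[
\|U(t,\tau)\vphi\|_{L^2(0,1)^n}\le M\,e^{\om(t-\tau)}\|\vphi\|_{L^2(0,1)^n}\qquad (t\ge\tau),
\]
with $M,\om$ independent of $\vphi$ and $\tau$. I introduce the weighted energy
\[
E(t):=\sum_{j\le m}\int_0^1 e^{\mu x}\,u_j(x,t)^2\,dx+\sum_{j>m}\int_0^1 e^{\mu(1-x)}\,u_j(x,t)^2\,dx,
\]
with a parameter $\mu>0$ to be chosen. Multiplying the $j$-th equation in (\ref{ss10}) by $2e^{\pm\mu x}u_j$ and integrating by parts in $x$ yields $\frac{d}{dt}E(t)=\mathcal{B}(t)+\mathcal{R}(t)$, where the bulk term $\mathcal{R}(t)$ is bounded by $C(\mu)E(t)$ through (\ref{ss44}) and (\ref{ss8}), while
\[
\mathcal{B}(t)=\sum_{j\le m}a_j(0,t)u_j(0,t)^2+e^\mu\!\!\sum_{j>m}|a_j(0,t)|u_j(0,t)^2-e^\mu\!\!\sum_{j\le m}a_j(1,t)u_j(1,t)^2-\sum_{j>m}|a_j(1,t)|u_j(1,t)^2.
\]
Substituting the reflection relations (\ref{ss5}) into the squared incoming traces $u_j(0,t)$ for $j\le m$ and $u_j(1,t)$ for $j>m$ bounds the positive part of $\mathcal{B}(t)$ by a constant $C_P$, depending only on $\|a\|_\infty$ and $\max_{j,k}|p_{jk}|$, times the sum of squared outgoing traces. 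Choosing $\mu$ so large that $e^\mu\Lambda_0\ge 2C_P$ forces $\mathcal{B}(t)\le 0$, so $E'(t)\le C(\mu)E(t)$. Gronwall, combined with the equivalence $\|u(\cdot,t)\|_{L^2}^2\le E(t)\le e^\mu\|u(\cdot,t)\|_{L^2}^2$, yields the desired estimate.

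Density together with this uniform operator bound extends $U(t,\tau)$ uniquely to a bounded operator on $L^2(0,1)^n$. Uniqueness of the continuous solution provides $U(\tau,\tau)=I$ and the cocycle relation $U(t,s)U(s,\tau)=U(t,\tau)$ on the dense subspace, which then pass to $L^2$ by continuity. Strong continuity of $(t,\tau)\mapsto U(t,\tau)\vphi$ on $\{t\ge\tau\}$ is first checked for smooth $\vphi$ using the representation (\ref{rep1}) and the $C^1$-dependence of the characteristics $\om_j$ on $(x,t)$, and is then extended to all of $L^2(0,1)^n$ by an $\eps/3$ argument based on the uniform $L^2$-bound. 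The most delicate point is the sign of the boundary contribution in the second step: no dissipativity is available from $P$, whose entries may be of arbitrary size, so the decisive device is to exploit the freedom to enlarge $\mu$ and use the strict hyperbolicity (\ref{ss4}) to make the exponentially amplified outgoing traces dominate the reflected incoming ones; making this quantitative, uniformly in $\tau$, is the main technical obstacle.
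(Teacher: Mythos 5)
Your proposal follows essentially the same route as the paper: classical well-posedness on a dense smooth subspace (the paper invokes the same references), an $L^2$ a priori bound obtained by re-weighting the components so that the exponentially amplified outgoing traces dominate the reflected incoming ones (the change of unknowns $v_j=\mu_j u_j$ in the paper's Lemma \ref{nn0}, with $\mu_j$ large at the outgoing and small at the incoming endpoint, is exactly your weight $e^{\mu x}$, $e^{\mu(1-x)}$ in disguise), followed by extension by density and the $\eps/3$ argument for strong continuity. One slip worth correcting: in your displayed $\mathcal{B}(t)$ the two $j>m$ terms carry the wrong signs --- the outgoing trace $e^\mu|a_j(0,t)|u_j(0,t)^2$ must enter with a minus and the incoming trace $|a_j(1,t)|u_j(1,t)^2$ with a plus --- as written the positive part of $\mathcal{B}(t)$ would grow with $\mu$ and could not be absorbed, but your surrounding prose describes the correct mechanism and the argument goes through once the integration by parts is redone.
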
 

Our second result, Theorem \ref{main_smoothing} below, states that the evolution family
has a smoothing property of the following kind.

\begin{defn}\label{defn:smoothing_abstr}  
Let $Y\hookrightarrow Z$ be continuously
embedded Banach spaces and, for each $\tau$ and $t\ge\tau$,  $V(t,\tau)\in\LL(Z)$. 
The two-parameter family
$\{V(t,\tau)\}_{t\ge\tau}$ 
is called {\rm  smoothing} from $Z$ to $Y$ if there is $T>0$ such that 
 $V(t,\tau)\in \LL(Z,Y)$ for all $t\ge\tau+T$.
\end{defn}

From  \reff{rep1} and (\ref{Q})  it follows that, if the system  (\ref{ss10}) is decoupled
(non-diagonal elements of $b$ vanish), then the continuous function $u$ fulfilling the equation
$$
u(x,t)=(Qu)(x,t)
$$ 
is a continuous solution to the  problem (\ref{ss10}), (\ref{ss5}), (\ref{ss11}). 
Let us introduce  a linear bounded operator
 $C\in\LL\left(BC(\R)^n,BC(\Pi)^n\right)$ by
$$
 (Cv)_j(x,t)=c_j(x_j,x,t)v_j(\om_j(x_j,x,t)), \quad j\le n,
$$
where $x_j$ is given by \reff{*k}.
Therefore, every  continuous solution to the decoupled problem
stabilizes to zero in a finite time  $d>0$ if and only if
\beq\label{U-smooth}
\begin{array}{ll}
\mbox{ there is }  
k\in\N \mbox{ such that } (CP)^ku\equiv 0
\mbox{ for all } u\in C\left(\overline\Pi\right)^n.
\end{array}
\ee
Moreover, due to (\ref{ss4}),  the time of extinction  does not exceed  
$\frac{k}{\Lambda_0}$. The condition \reff{U-smooth} will be crucial for the following  
smoothing result, that will be proved in 
Section~\ref{sec:smoothing}.

\begin{thm}\label{main_smoothing}
Assume that the  coefficients $a_j,b_{jk}$, and $p_{jk}$ in  (\ref{ss10}) and (\ref{ss5}) fulfill the
conditions  (\ref{ss44}), (\ref{ss4}), (\ref{ss8}), and \reff{U-smooth}. 
Moreover, assume that 
\beq
\label{cass}
\mbox{for all } 1 \le j \not= k \le n \mbox{ there exists }   \beta_{jk} \in BC^1(\Pi) \mbox{ such that } 
b_{jk}=\beta_{jk}(a_k-a_j).
\ee
Then the evolution family $\{U(t,\tau)\}_{t\ge \tau}$ 
generated by  (\ref{ss10}), (\ref{ss5}) is smoothing 
from $L^2\left(0,1\right)^n$ to  $C^1([0,1])^n$. 
\end{thm}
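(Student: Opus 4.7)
The plan is to iterate the integral representation (\ref{rep1}) and combine the two hypotheses of the theorem: assumption (\ref{U-smooth}) provides finite-time extinction for the pure boundary-reflection operator $CP$, while assumption (\ref{cass}) supplies a Jacobian cancellation needed to gain smoothness at every off-diagonal coupling. I would first establish the estimate $\|u(\cdot,t)\|_{C^1([0,1])^n} \le C\,\|\vphi\|_{L^2(0,1)^n}$ for $t\ge\tau+T$ on the dense set of initial data $\vphi\in C^1([0,1])^n$ satisfying the compatibility conditions at $t=\tau$, so that the classical solution $u\in C^1(\overline\Pi_\tau)^n$ admits all pointwise manipulations; the extension to $\vphi\in L^2(0,1)^n$ then follows by density together with the $L^2$-continuity of $U(t,\tau)$ provided by Theorem~\ref{evol}.

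By (\ref{ss4}), for $t-\tau>1/\Lambda_0$ the initial-data branch of (\ref{Q}) is inactive, so that (\ref{rep1}) reduces to $u=CPu-Ku$ with
\[
(Ku)_j(x,t)=\int_{x_j(x,t)}^{x}d_j(\xi,x,t)\sum_{k\ne j}b_{jk}(\xi,\om_j)\,u_k(\xi,\om_j(\xi,x,t))\,d\xi
\]
and $CP$ as introduced before (\ref{U-smooth}). Iterating on $\{t-\tau>N/\Lambda_0\}$ gives the operator identity $u=(CP-K)^N u=\sum_{w\in\{CP,K\}^N}(-1)^{\sharp_K(w)}w(u)$. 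Hypothesis (\ref{U-smooth}) furnishes $k_0\in\N$ with $(CP)^{k_0}\equiv 0$ on $C(\overline\Pi)^n$, so every word containing $(CP)^{k_0}$ as a consecutive subword vanishes. Consequently, for $N\ge k_0$, every surviving word carries $K$ at least once in every block of $k_0$ letters, hence contains at least $\lceil N/k_0\rceil$ $K$-factors. Continuing the iteration beyond $N/\Lambda_0$, the characteristics eventually reach $\{t=\tau\}$ and $u$ is replaced there by $\vphi$ via the initial-data branch of (\ref{Q}).

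The smoothing gain is concentrated at each $K$-factor. Using (\ref{cass}), the integrand of $K$ equals $\beta_{jk}(a_k-a_j)\,u_k(\xi,\om_j)$. Setting $s:=\om_k(x_k,\xi,\om_j(\xi,x,t))$ and differentiating implicitly the identity $\om_k(\xi,x_k,s)=\om_j(\xi,x,t)$ via (\ref{char}),
\[
\frac{ds}{d\xi}=\frac{(a_k-a_j)(\xi,\om_j)}{a_j(\xi,\om_j)\,a_k(\xi,\om_j)\,\partial_s\om_k(\xi,x_k,s)}.
\]
Hypotheses (\ref{ss44}) and (\ref{ss4}) make the denominator of class $BC^1$ and bounded away from zero, so the singular factor $(a_k-a_j)$ is absorbed exactly by the Jacobian and the integral transforms to $\int_{s_-(x,t)}^{s_+(x,t)}\mathcal K(s,x,t)\,u_k|_{\text{boundary}}(s)\,ds$, with both $\mathcal K$ and $s_\pm$ of class $C^1$ in $(x,t)$, uniformly in $s$. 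Iterating this substitution at each $K$-factor in a surviving word and tracing the iteration back to $\{t=\tau\}$, every surviving $w(u)(x,t)$ is expressed as a multiple integral of $\vphi$ against a $C^1$-smooth kernel. Cauchy--Schwarz then gives $\|w(u)(\cdot,t)\|_{C^1([0,1])^n}\le C\|\vphi\|_{L^2}$, and summing over the finite collection of surviving words yields the smoothing estimate with $T=(k_0+1)/\Lambda_0$ (or a suitable larger constant), completing the proof.

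The main obstacle is propagating $C^1$-dependence on $(x,t)$ through the composition of implicit functions $s_i(\xi_i,x,t)$ introduced by successive changes of variables; this relies crucially on (\ref{ss4}) for the non-degeneracy of the characteristic ODEs and on (\ref{cass}) for the cancellation of the Jacobian singularity at characteristic tangencies. Without the latter the composed kernel would generically lose $C^1$-regularity at points where two speeds $a_j,a_k$ coincide, and the argument would fail.
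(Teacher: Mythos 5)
Your overall strategy (iterate the characteristic integral representation, use \reff{U-smooth} to annihilate words made only of reflections, and use \reff{cass} to make the change of variables along the other family of characteristics non-degenerate) is the same as the paper's, and your Jacobian computation is essentially the identity \reff{theta}. But there are two genuine gaps. First, the $L^2\to C$ gain is \emph{not} concentrated at a single $K$-factor. One application of $K$ integrates $u$ along a one-dimensional characteristic curve, and after your change of variables you are left with a boundary or curve trace of $u$; neither is controlled by $\sup_\theta\|u(\cdot,\theta)\|_{L^2(0,1)}$, which is all that Theorem \ref{evol} gives you for $L^2$ data. The paper needs the composition of \emph{two} integral factors, $D^2u$ or $D(CP)^iDu$ (Claims 1 and 2 in the proof of Lemma \ref{lem:d}): only then does the change of variables produce a genuinely two-dimensional integral $\int\!\!\int|u(\eta,\theta)|\,d\eta\,d\theta$, dominated by $2d\max_\theta\int_0^1|u(\eta,\theta)|\,d\eta\le C\|u\|_{C([\tau,\cdot],L^2)}$. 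Relatedly, your plan to ``trace the iteration back to $\{t=\tau\}$'' and replace everything by $\vphi$ does not terminate in finitely many steps: neither $CP$ nor $K$ steps backward in time by a definite positive amount (the delay $t-\om_j(x_j,x,t)$ can be arbitrarily small near the inflow boundary), so the full unwinding is an infinite Neumann series. The paper avoids this by iterating only twice, keeping $u$ on the right-hand side, and closing with the a priori estimate \reff{eq:apr2}.

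Second, the one-shot jump to $C^1$ does not work as stated. Even granting a representation $\int_{\eta_-(x,t)}^{\eta_+(x,t)}\tilde{\mathcal K}(\eta,x,t)\vphi(\eta)\,d\eta$ with a $C^1$ kernel, differentiating in $x$ or $t$ produces the boundary terms $\tilde{\mathcal K}(\eta_\pm,x,t)\,\vphi(\eta_\pm(x,t))\,\partial\eta_\pm$, i.e., pointwise values of $\vphi\in L^2(0,1)^n$, which are not controlled by $\|\vphi\|_{L^2}$. This is precisely why the paper splits the result into Lemma \ref{lem:d} ($L^2\to Y_0$, proved there) and Lemma \ref{lem:d1} ($Y_0\to C^1$, quoted from \cite[Theorem 2.7]{Km}): in the second step the boundary terms arising from differentiation involve pointwise values of the solution itself, which by the first step are already bounded by $K\|\vphi\|_{L^2}$ in the $C$-norm. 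As written, your final estimate $\|w(u)(\cdot,t)\|_{C^1}\le C\|\vphi\|_{L^2}$ does not follow from the smoothness of the kernels alone.
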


 \begin{cor} \label{sm1}
Assume that the unperturbed problem (\ref{ss2}), (\ref{ss5}) fulfills the conditions
 (\ref{ss44}), (\ref{ss4}),  and \reff{U-smooth}. 
If 
$\tilde{b}_{jj}\equiv 0$ and  (\ref{cass})
is fulfilled with $\tilde b_{jk}$ in place of $b_{jk}$,
 then the evolution family $\{\tilde U(t,\tau)\}_{t\ge\tau}$
generated by the perturbed problem (\ref{ss7}), (\ref{ss5}) is  smoothing from
 $L^2\left(0,1\right)^n$ to  $C^1([0,1])^n$. 
\end{cor}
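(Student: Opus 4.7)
The plan is to derive the corollary as a direct application of Theorem~\ref{main_smoothing} to the perturbed system~(\ref{ss7}), viewed as an instance of the generic system~(\ref{ss10}) with coefficient matrix $b:=b_d+\tilde b$. Thus, the task reduces to checking that the four structural hypotheses of Theorem~\ref{main_smoothing}, namely~(\ref{ss44}), (\ref{ss4}), (\ref{ss8}), (\ref{U-smooth}), and~(\ref{cass}), carry over from the unperturbed problem to the perturbed one.

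First I would verify the regularity and hyperbolicity conditions. Condition~(\ref{ss4}) is a condition on the $a_j$ alone and is unchanged by the perturbation. Condition~(\ref{ss44}) for the perturbed problem requires $a_j$, the diagonal entries $b_j+\tilde b_{jj}=b_j$, and the off-diagonal entries $\tilde b_{jk}$ to lie in $BC^1(\Pi)$; all of these are in the hypotheses. Condition~(\ref{ss8}) demands the same for every entry of $b_d+\tilde b$, and is again immediate from the same facts together with $b_d$ being diagonal.

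The key observation concerns the finite-extinction condition~(\ref{U-smooth}). Inspection of the defining operator $C$ shows that it is built solely from the characteristics $\omega_j$, the endpoint abscissae $x_j$ given by~(\ref{*k}), and the weights
$$
c_j(\xi,x,t)=\exp\int_x^\xi \frac{b_{jj}}{a_j}(\eta,\omega_j(\eta,x,t))\,d\eta,
$$
none of which involves the off-diagonal entries of the zero-order coefficient matrix. Since $\tilde b_{jj}\equiv 0$ by hypothesis, the diagonal part of $b_d+\tilde b$ equals $b_d$ exactly, so the operator $C$ attached to the perturbed system coincides with the one attached to the unperturbed system. Consequently, the assumed finite extinction property~(\ref{U-smooth}) transfers verbatim to the perturbed problem.

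It remains to check~(\ref{cass}) for $b_d+\tilde b$: because $b_d$ is diagonal, the off-diagonal entries of $b_d+\tilde b$ are just $\tilde b_{jk}$ for $j\ne k$, and these admit the required factorization $\beta_{jk}(a_k-a_j)$ with $\beta_{jk}\in BC^1(\Pi)$ by the corollary's hypothesis. With all assumptions of Theorem~\ref{main_smoothing} fulfilled, applying it to the perturbed problem yields the claimed smoothing of $\{\tilde U(t,\tau)\}_{t\ge\tau}$ from $L^2(0,1)^n$ to $C^1([0,1])^n$. No real obstacle is anticipated; the only delicate point is recognising that both~(\ref{U-smooth}) and the structure of $C$ are insensitive to off-diagonal perturbations, which is precisely why the single scalar assumption $\tilde b_{jj}\equiv 0$ suffices to preserve the finite-time extinction of the decoupled part.
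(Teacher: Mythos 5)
Your proposal is correct and follows exactly the paper's route: the paper likewise deduces the corollary from Theorem~\ref{main_smoothing} by observing that the condition \reff{U-smooth} is stable under perturbations $\tilde b$ with $\tilde b_{jj}\equiv 0$, since the operator $CP$ depends only on the diagonal part of the zero-order coefficient. Your write-up merely makes explicit the routine verification of \reff{ss44}, \reff{ss4}, \reff{ss8}, and \reff{cass} that the paper leaves implicit.
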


The proof of the corollary straightforwardly follows from Theorem \ref{main_smoothing} and the fact that
the condition \reff{U-smooth} is stable with respect to perturbations $\tilde{b}$ of the matrix $b$ such that
$\tilde{b}_{jj}\equiv 0$. 

In  Section \ref{sec:perturb}  we prove the main result stating that the
evolution family of  the perturbed problem (\ref{ss7}), (\ref{ss5})
is exponentially stable.

 \begin{thm}\label{hyper}
Under   the conditions (\ref{ss44}), (\ref{ss4}),  \reff{U-smooth} the following is true.

$(\io)$ For any $\ga>0$ there exist $\eps>0$ and $M=M(\ga)\ge 1$ such that, whenever
 $\max_{j,k}\bigl\|\tilde b_{jk}\bigr\|_\infty<\eps$,
the evolution family $\{\tilde U(t,\tau)\}_{t\ge\tau}$ generated by the perturbed problem
 (\ref{ss7}), (\ref{ss5}) fulfills the bound
\beq\label{bound1}
\left\|\tilde U(t,\tau)\right\|_{\LL\left(L^2(0,1)^n\right)}\le 
M e^{-\ga(t-\tau)} \,\, \mbox{ for }\,\,  t\ge\tau.
\ee

$(\io\io)$ If 
$\tilde{b}_{jj}\equiv 0$ and  (\ref{cass})
is fulfilled  with $\tilde b_{jk}$ in place of $b_{jk}$, then  for any $\ga>0$ there exist $\eps>0$ and $M_1=M_1(\ga)\ge M$ such that,
whenever  $\max_{j,k}\bigl\|\tilde b_{jk}\bigr\|_1<\eps$,
the evolution family $\{\tilde U(t,\tau)\}_{t\ge\tau}$
fulfills the bound
\beq\label{bound2}
\left\|\tilde U(t,\tau)\right\|_{\LL\left(L^2(0,1)^n,C^1([0,1])^n\right)}\le 
M_1 e^{-\ga(t-\tau)}\,\, \mbox{ for }\,\, t\ge\tau+T_1
\ee 
for some constant $T_1>0$.
\end{thm}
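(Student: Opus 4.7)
I would prove part $(\io)$ by a variation-of-constants argument that exploits the finite-time extinction of the \emph{unperturbed} evolution family $U(t,\tau)$, and then deduce part $(\io\io)$ by composing the $L^2$-decay bound with the smoothing estimate of Corollary~\ref{sm1}.

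For part $(\io)$, the first step is to justify the identity
\beq\label{DVOC}
\tilde U(t,\tau)\vphi = U(t,\tau)\vphi - \int_\tau^t U(t,s)\,\tilde b(\cdot,s)\,\tilde U(s,\tau)\vphi\, ds, \qquad \vphi\in L^2(0,1)^n,
\ee
where $\tilde b(\cdot,s)$ is viewed as a bounded multiplication operator on $L^2(0,1)^n$. Both evolution families exist by Theorem~\ref{evol} (the perturbed coefficient $b_d+\tilde b$ also satisfies \reff{ss8}), and \reff{DVOC} should follow on a dense set of smooth initial data by direct differentiation in $s$ of $U(t,s)\tilde U(s,\tau)\vphi$ and then extended to $L^2$ by Theorem~\ref{evol}. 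The second step is to note that, because $b_d$ is diagonal, the coupling integral in \reff{rep1} vanishes, so the solution operator for the unperturbed problem is the iteration of the operator $CP$ applied to the initial data; hypothesis \reff{U-smooth} therefore yields $U(t,\tau)\equiv 0$ for $t-\tau > T_0:=k/\Lambda_0$, while on $[\tau,\tau+T_0]$ Theorem~\ref{evol} supplies a uniform bound $\|U(t,\tau)\|_{\LL(L^2)}\le M_0$ (independent of $\tau$, since the coefficients are in $BC^1(\Pi)$).

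Inserting both facts into \reff{DVOC}, and setting $\eps=\max_{j,k}\|\tilde b_{jk}\|_\infty$, gives for $t\ge\tau+T_0$
$$
\|\tilde U(t,\tau)\|_{\LL(L^2)}\le M_0\eps T_0 \sup_{s\in[t-T_0,t]}\|\tilde U(s,\tau)\|_{\LL(L^2)},
$$
while a standard Gronwall bound on $[\tau,\tau+T_0]$ gives $\|\tilde U(s,\tau)\|_{\LL(L^2)}\le M_0 e^{M_0\eps T_0}$. An iteration on the intervals $[\tau+nT_0,\tau+(n+1)T_0]$ then yields geometric decay with ratio $\alpha:=M_0\eps T_0$ per interval of length $T_0$. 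Given $\ga>0$, choosing $\eps$ so small that $M_0\eps T_0\le e^{-\ga T_0}$ produces \reff{bound1} with an appropriate $M=M(\ga)$.

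For part $(\io\io)$, Corollary~\ref{sm1} provides $T>0$ such that $\tilde U(t,\tau)\in\LL(L^2(0,1)^n,C^1([0,1])^n)$ for $t-\tau\ge T$. A careful inspection of the proof of Theorem~\ref{main_smoothing} (using the bounds from the integral representation \reff{rep1} together with $\|\tilde b\|_1<\eps$ and \reff{cass}) should yield a uniform constant $M_2$ such that $\|\tilde U(t,t-T)\|_{\LL(L^2,C^1)}\le M_2$ for all $t$ and all admissible small $\tilde b$. The evolution property then gives
$$
\|\tilde U(t,\tau)\vphi\|_{C^1}\le M_2\,\|\tilde U(t-T,\tau)\vphi\|_{L^2}\le M_2 M e^{\ga T} e^{-\ga(t-\tau)}\|\vphi\|_{L^2}
$$
for $t\ge\tau+T$, which yields \reff{bound2} with $T_1=T$ and $M_1=M_2Me^{\ga T}$. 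The hardest step is likely establishing this last uniformity: one needs the smoothing bound to be independent of $t$ and to remain bounded as $\eps\to 0$, which requires tracking the dependence of the constants in the proof of Theorem~\ref{main_smoothing} on the $BC^1$-norms of the coefficients and verifying that the smoothing time $T$ can be chosen independently of the (small) perturbation.
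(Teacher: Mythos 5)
Your proposal is correct and follows essentially the same route as the paper: part $(\io)$ via the variation-of-constants formula combined with the finite-time extinction of $U(t,\tau)$, reducing to an integral inequality over a window of length $d=k/\Lambda_0$ that is iterated to give geometric decay with ratio controlled by $\eps$, and part $(\io\io)$ by composing the uniform $L^2\to C^1$ smoothing bound with the $L^2$ decay. The only notable difference is that in part $(\io\io)$ you use the two-factor splitting $\tilde U(t,\tau)=\tilde U(t,t-T)\tilde U(t-T,\tau)$, whereas the paper uses a three-factor splitting that additionally invokes the $C^1\to C^1$ a priori bound of Theorem~\ref{km}; your version is marginally more economical and both hinge on the same uniformity of the smoothing constant (the paper's estimate \reff{eq:nnn2}), which you correctly flag as the essential point to verify.
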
 

\begin{rem}\rm
The assumption $\tilde{b}_{jj}\equiv 0$ of Theorem \ref{hyper} $(\io\io)$  can be dropped in some cases
(see Examples \ref{exx1} and \ref{exx2}). However,
 we cannot avoid it in general (see Example \ref{bjjne0}). 
The reason is that  the condition  \reff{U-smooth}, 
ensuring the smoothing property for the perturbed problem, can be destroyed by perturbations  
$\tilde{b}_{jj}$ of ${b}_{jj}$.
\end{rem}

\section{Examples and comments}\label{sec:exam}
\renewcommand{\theequation}{{\thesection}.\arabic{equation}}
\setcounter{equation}{0}

\subsection{Condition \reff{cass} is essential}

The condition \reff{cass} is a kind of Levy condition for compensating
weak 
hyperbolicity. Let us show that it is crucial for the regularity result stated in
Theorem~\ref{main_smoothing}.

\begin{ex}\rm
Let $\vphi: \R\to\R$ be a continuous $1$-periodic function which is continuously differentiable on 
$(0,1)$. Consider the following problem in $\Pi_0$:
\begin{equation}
 \begin{array}{ll}
 \displaystyle\partial_{t}u_1
 +\partial_{x}u_1=0,   \\ 
\displaystyle\partial_{t}u_2
 +\partial_{x}u_2
 -u_1=0, 
 \end{array}
 \label{f1ex1}
 \end{equation} 
 \begin{equation}\label{f31}
 \begin{array}{ll}
 u_{1}(x,0)=\vphi(x), \\ 
u_{2}(x,0)=x\vphi(x),
 \end{array}
 \end{equation}
 \beq\label{f2ex1}
 \begin{array}{ll}
  u_{1}(0,t)=u_{2}(1,t), \\ 
  u_{2}(0,t)=0. 
  \end{array}
  \ee
This problem is a particular case of  (\ref{ss10}), (\ref{ss11}), (\ref{ss5})
 and satisfies all assumptions of Theorem~\ref{main_smoothing} except  \reff{cass}. 
It is straightforward to check that
$$
u_1=\vphi(x-t), \;\;\; 
u_2=x\,\vphi(x-t)
$$
is a continuous  solution to the problem \reff{f1ex1}, \reff{f31},  \reff{f2ex1}.
One can easily see that this solution is not continuously differentiable even if $t$
is supposed to be large,
since its  regularity   does not exceed  the regularity 
 of the initial function $\vphi(x)$ for any  $t$.
Thus, the conclusion of Theorem \ref{main_smoothing} cannot be ensured without \reff{cass}.
\end{ex}

\subsection{Theorem \ref{hyper} is not true under ``large''
 perturbations}

Here we present  simple examples showing that if the entries of the 
perturbation matrix $\tilde b$ are not small enough,  then
the statement of Theorem \ref{hyper}  about the exponential stability of
 (\ref{ss7}), \reff{ss11}, (\ref{ss5})  fails.  

\begin{ex}\rm
In $\Pi_0$, let us consider the $2\times 2$-system
\beq
\begin{array}{rcl}\label{ex11}
\d_tu_1+\d_xu_1&=&\nu u_2,\\
\d_tu_2-\d_xu_2&=&0
\end{array}
\ee
with the boundary conditions
\beq\label{ex2}
u_1(0,t)=0,\quad u_2(1,t)=u_1(1,t).
\ee
Note that the unperturbed problem (when $\nu$=0) is superstable. We now show that, if
$\nu>1$, then  \reff{ex11},  \reff{ex2} is  not  exponentially stable.
To this end,
 consider the corresponding spectral problem 
\beq\label{ex3}
\begin{array}{ll}
\la v_1+v_1^\prime=\nu v_2,\,\, \la
 v_2-v_2^\prime =0, \quad 0< x< 1,\\
v_1(0)=0,\quad v_2(1)=v_1(1),
\end{array}
\ee
with  spectral parameter $\la$.
All solutions are given by the formulas
$$
v_1(x,\la)=Ce^{-\la x}+C_1\frac{\nu}{2\la}e^{\la x},\quad v_2(x,\la)=C_1e^{\la x}.
$$
The boundary conditions imply that the problem \reff{ex3} has a nonzero solution if and only if the
spectral parameter $\la$ satisfies the characteristic equation
$$
\frac{\nu}{2\la}(e^\la-e^{-\la})=e^\la,\quad \la\ne 0.
$$
Setting $\ga=2\la$, we get 
\beq\label{ii}
\nu\left(1-e^{-\ga}\right)=\ga.
\ee
If  $\nu>1$, then
 this equation has a positive solution $\ga$ and, hence 
 the spectral problem \reff{ex3} has a positive eigenvalue.
Due to the spectral mapping
 theorem, the problem \reff{ex11}, \reff{ex2} for $\nu>1$ is not  exponentially stable.

In \cite{els} it is proved that, if $|\nu|<1$, then all non-zero solutions to \reff{ii}
have  negative real parts uniformly separated from zero. This means that  the problem \reff{ex11}, \reff{ex2}
is exponentially stable for  $|\nu|<1$.

\end{ex}

\begin{ex}\rm
Now in $\Pi_0$  we consider a $2\times 2$-system with a diagonal lower-order part, namely
\beq
\label{example}
\begin{array}{l}
\d_tu_1+\d_xu_1+\mu u_1-\nu u_2=0,\,\,\d_tu_2-\d_xu_2=0,\quad x \in (0,1),\\
u_1(0,t)=0 , \quad u_1(1,t)=u_2(1,t),
\end{array}
\ee
where $\mu$ and $\nu$ are positive constants.
The unperturbed problem ($\nu=0$) is superstable.
Our aim  is to show that the smallness of  $\nu$ required to ensure the exponential stability 
of the problem \reff{example} can be expressed in terms of the diagonal lower-order coefficient $\mu$.

The corresponding eigenvalue problem reads
\beq
\label{exampleevp}
\begin{array}{l}
v_1^\prime+(\mu+\la) v_1-\nu v_2=0,\,\,v_2^\prime-\la v_2=0,\quad x \in (0,1),\\
v_1(0)=0,\,\,v_1(1)-v_2(1)=0,
\end{array}
\ee
where $\la$ is a spectral parameter.
The system  \reff{exampleevp} is equivalent to
$$
\displaystyle v_1(x)=\frac{c\nu}{2\la+\mu}\left(e^{\la x}-e^{-(\mu+\la)x}\right),\; v_2(x)=ce^{\la x}, 
$$
where
$$
\nu e^{-\mu-2\la}=\nu-2\la-\mu.
$$
Here $c=v_2(0)$ is a nonzero complex constant.
Setting $z=2\la$, we come to the characteristic quasipolynomial equation for $z$, namely
\beq\label{quasi}
z+a+be^{-z}=0,
\ee
where
$
a=\mu-\nu$, $b=\nu e^{-\mu}.
$
This equation is in detail analyzed in \cite{els} in the context of  asymptotic stability 
for ordinary differential equations with retarded arguments. Accordingly to the results obtained 
in \cite{els}, if 
\beq\label{k1k}
\nu<\frac{\mu}{1+e^{-\mu}},
\ee  
then the solutions to  \reff{quasi} have negative real parts,  uniformly separated from zero. Hence, the problem 
\reff{example} is exponentially stable. Moreover, if 
\beq\label{k2k}
\nu>\frac{\mu+1}{1-e^{-\mu}},
\ee 
then at least one solution to \reff{quasi} has a positive real part, which implies that the problem 
\reff{example} for such $\mu$ and $\nu$ is not exponentially stable.

Note that the condition of smallness similar to \reff{k1k} will  naturally appear 
in the proof of 
Theorem \ref{hyper}; cf. the equality \reff{*2}. 
\end{ex}

\subsection{Condition \reff{U-smooth} appears in 
applications}

It is worth to note 
that the condition \reff{U-smooth} is fulfilled  in many applications. 

\begin{ex}\label{exx1}\rm
The papers \cite{Zel,Zel1} discuss catalytic processes in a chemical reactor. 
A chemical reaction is of zero order if the reaction rate does not depend
on the amount of  reactants. Such reactions
are described by the following boundary value
problem for a
$2\times 2$-semilinear hyperbolic system in $\Pi_0$:
\beq\label{ex1}
\begin{array}{rcl}
\displaystyle
\beta \d_t\Theta+\d_x\Theta&=&Q K e^\Theta-\mu (\Theta-\Theta_r),
\\
\displaystyle
\d_t{\Theta_r}-\d_x{\Theta_r}&=&\mu (\Theta-\Theta_r),\\ [2mm]
\Theta(0,t)&=&\Theta_r(0,t), \\ \Theta_r(1,t)&=&\vartheta_0,
\end{array}
\ee
where $\Theta$ denotes the temperature in the reactor and
$\Theta_r$  the temperature in the  refrigerator. 
Moreover, $\be$, $\mu$,  $K$,   $Q$, and $\vartheta_0$ are  positive constants characterizing
a catalyst and a reactant.
It is supposed that the initial data $\Theta(x,0)$ and $\Theta_r(x,0)$ are given.

 Linearization of the problem \reff{ex1}
at the stationary solution $\Theta_0(x)$,  ${\Theta_r}_0(x)$,
whose existence is proved in  \cite{Rom,Zel}, is a boundary value problem 
with respect to 
$u=\Theta-\Theta_0$ and  $v=\Theta_r-{\Theta_r}_0$, namely  the system
\beq\label{eq:sys104}
\begin{array}{rcl}
\displaystyle
\beta \d_tu+\d_xu&=&(Q K e^{\Theta_0}-\mu )u+\mu v,
\\
\displaystyle
\d_tv-\d_xv&=&\mu (u-v),
\end{array}
\ee
subjected to the boundary conditions
\beq\label{eq:sys105}
u(0,t)=v(0,t), \quad v(1,t)=0 
\ee
and the initial conditions
$
u(x,0)=u_0(x)$, $v(x,0)=v_0(x).
$

Note that the system (\ref{eq:sys104}), (\ref{eq:sys105}) is a perturbation
of the following superstable system:
$$
\begin{array}{rcl}
\displaystyle
\beta \d_tu+\d_xu&=&Q K e^{\Theta_0}u,
\nonumber\\
\displaystyle
\d_tv-\d_xv&=&0,
\\ [2mm]
u(0,t)&=&v(0,t),\\ v(1,t)&=&0. 
\end{array}
$$
The condition
\reff{U-smooth} is here fulfilled with $k=2$, that is,  $(CP)^2z=0$ for all 
$z=(u,v)\in C(\overline\Pi)^2$. Indeed,
\beq\label{CP2}
\begin{array}{rcl}
(CPz)_1(x,t)&=&\displaystyle e^{\int_0^x b_{11}(\eta)\,d\eta }v(0,t-\beta x),
\\ [2mm]
(CPz)_2(x,t)&=&0,\\ [2mm]
\displaystyle
((CP)^2z)_1(x,t)&=&\displaystyle e^{\int_0^x b_{11}(\eta)\,d\eta +\int_0^1b_{22}(\eta)\,d\eta }\,v(1,t-\beta x-1)=0,
\end{array}
\ee
where $b_{11}(x)=Q K e^{\Theta_0(x)}$ and
$b_{22}(x)=0$.

From Theorem \ref{hyper} $(\io)$ it follows that, for sufficiently small
$\mu$,  the evolution family  $U(t,\tau)$ generated by the linear problem
(\ref{eq:sys104}), (\ref{eq:sys105}) is uniformly exponentially stable in $L^2$,
with an exponential decay rate $\ga>0$. This means that the eigenvalues  of the corresponding 
eigenvalue problem have negative real parts uniformly separated from zero.
By the linearization principle for autonomous non-linear strictly 
hyperbolic systems in $C^1$, proved in
 \cite{Elt}, the corresponding $C^1$-stationary solution to the nonlinear problem \reff{ex1}
 is exponentially stable in $C^1$, with an exponential decay rate 
$\ga^\prime$ such that
$0<\ga^\prime<\ga$.

On the other side, from  the proof of Theorem \ref{hyper} $(\io\io)$ one can easily see the following: 
 For sufficiently small
$\mu$, the problem (\ref{eq:sys104}), (\ref{eq:sys105}) is uniformly exponentially stable not only 
in $L^2$ but also in $C^1$, despite
$\tilde{b}_{11}=\tilde{b}_{22}=\mu\ne 0$ (while in Theorem \ref{hyper} $(\io\io)$ 
we have   $\tilde{b}_{11}=\tilde{b}_{22}\equiv 0$). 
Our argument works out because  the  
condition \reff{U-smooth}, which entails the smoothing property of the perturbed problem, is 
stable with respect to perturbations of $b_{jj}$, which entails the smoothing property of the perturbed operator
(in general this is not true, see Example~\ref{bjjne0} below).
Indeed, the condition
\reff{U-smooth} for the perturbed problem (\ref{eq:sys104}), (\ref{eq:sys105}) is fulfilled with $k=2$, 
and the equalities   \reff{CP2} are true  with  $b_{11}(x)=Q K e^{\Theta_0(x)}-\mu$ and
$b_{22}(x)=- \mu$.

\end{ex}

\begin{ex}\label{exx2}\rm
A chemical reaction is of first order if the reaction rate  depends linearly
on the concentration of    reactants. In the presence of a catalyst and
 the internal heat exchange, first order reactions are described by the following 
initial-boundary value
problem in $\Pi_0$ for a $3\times 3$-semilinear hyperbolic system:
\beq\label{ex2chem}
\begin{array}{rcl}
\beta \d_t\Theta+\d_x\Theta&=&Q K e^\Theta(1-C)-\mu (\Theta-\Theta_r),
\\
\d_tC+\d_xC&=&K(1-C)e^\Theta,
\\
\d_t{\Theta_r}-\d_x{\Theta_r}&=&\mu (\Theta-\Theta_r),
\\[3mm]
\Theta(0,t)&=&\Theta_r(0,t),\\
 C(0,t)&=&0,\\
 \Theta_r(1,t)&=&\vartheta_0,
\end{array}
\ee
where
$C$ is the concentration of the reactant. The initial data 
$\Theta(x,0)$, $C(x,0)$, and $\Theta_r(x,0)$ are supposed to be given.

In \cite{Rom,Zel} it is proved that  the problem has a stationary solution for certain parameters.
Note that the linearization of \reff{ex2chem} in a neighborhood of the stationary 
solution $\Theta_0(x), C_0(x), {\Theta_r}_0(x)$ is a particular case of our problem. Indeed,
with respect to 
$u=\Theta-\Theta_0$,  $v=C-C_0$, and  $w=\Theta_r-{\Theta_r}_0$, it is represented by the system 
\beq\label{eq:sys102}
\begin{array}{rcl}
\displaystyle
\beta \d_tu+\d_xu&=&(Q K e^{\Theta_0}(1-C_0)-\mu)u-Q K e^{\Theta_0}v+\mu w,
\nonumber\\
\displaystyle
\d_tv+\d_xv&=& K e^{\Theta_0}(1-C_0)u-Ke^{\Theta_0} v,
\nonumber\\
\displaystyle
\d_tw-\d_xw&=&\mu (u-w)
\nonumber
\end{array}
\ee
with the boundary conditions
\beq\label{eq:sys101}
u(0,t)=w(0,t),\quad v(0,t)=0, \quad w(1,t)=0 
\ee
and the initial conditions
$
u(x,0)=u_0(x),v(x,0)=v_0(x), w(x,0)=w_0(x).
$

Note that the system (\ref{eq:sys102}), (\ref{eq:sys101}) is a perturbation
of a superstable system, namely 
$$
\begin{array}{rcl}
\displaystyle
\beta \d_tu+\d_xu&=&0,
\nonumber\\
\displaystyle
\d_tv+\d_xv&=& 0,
\nonumber\\
\displaystyle
\d_tw-\d_xw&=&0,
\\ [2mm]
u(0,t)&=&w(0,t),\\ v(0,t)&=&0, \\ w(1,t)&=&0. 
\end{array}
$$
Again, the condition
\reff{U-smooth} is fulfilled here with $k=2$. Indeed,
\beq\label{00}
\begin{array}{rcl}
(CPz)_1(x,t)&=&\displaystyle e^{\int_0^x b_{11}(\eta)\,d\eta }w(0,t-\beta x),
\\ [2mm]
(CPz)_2(x,t)&=&(CPz)_3(x,t)=0,\\ [2mm]
\displaystyle
((CP)^2z)_1(x,t)&=&\displaystyle e^{\int_0^xb_{11}(\eta)\,d\eta +\int_0^1b_{33}(\eta)\,d\eta }\,w(1,t-\beta x-1)=0,
\end{array}
\ee
where $z=(u,v,w)$ and $b_{11}(x)=b_{33}(x)\equiv 0$.

Similarly to the previous example, the linear problem \reff{eq:sys102}, \reff{eq:sys101} 
 is uniformly exponentially stable in $L^2$ for all sufficiently small
$\mu$ and $K$. Moreover, the corresponding $C^1$-stationary solution to the nonlinear problem \reff{ex2chem}
 is exponentially stable in $C^1$.

The  condition \reff{U-smooth} is, again, 
stable with respect to perturbations of $b_{jj}$. This follows from the formulas \reff{00} where for the perturbed problem
\reff{eq:sys102}, \reff{eq:sys101}  we have
$b_{11}(x)=Q K e^{\Theta_0(x)}(1-C_0(x))-\mu$ and
$b_{33}(x)=- \mu$. Using now the same argument as in Example \ref{exx1}, we conclude that 
 for all sufficiently small
$\mu$ and $K$ the problem \reff{eq:sys102}, \reff{eq:sys101}  is uniformly exponentially stable also in $C^1$.

\end{ex}

\begin{ex}\label{exx3}\rm
Another example is given by the following nonlinear problem in $\Pi_0$ that
comes from the boundary control theory \cite{lakra}:
\begin{eqnarray*}
\d_tu+a_1 \d_xu&=&b(x)u+c(x)v+uv,\\
\d_tv-a_2\d_xv&=&(1-b(x))u+(1-c(x))v-uv,\\[3mm]
u(0,t)&=&0,\\
v(1,t)&=&h(u(1,t),\la,u_d),
\end{eqnarray*}
where
$a_1>0$, $a_2>0$,
$\la$ is a control parameter   based on the boundary measurement
$u(1,t)$, and
 $u_d$ denotes the desired level required for the signal output $u(1,t)$.
Again, the linearization is covered by  \reff{ss10}, \reff{ss5} and fulfills \reff{U-smooth}
with $k=2$.
\end{ex}

\subsection{Our results apply to nonlinear problems}
Note that in Examples \ref{exx1}--\ref{exx3} above we deal with nonlinear problems.

\subsection{Condition \reff{U-smooth}  is in general not stable with respect to perturbations of $b_{jj}$} 
\begin{ex}\label{bjjne0}\rm
Let us consider the superstable system  perturbed in the diagonal lower order part, namely
\beq\label{non1}
\partial_tu_1+\partial_xu_1=0,\quad \partial_tu_2+\partial_xu_2=\eps u_2,\quad  \partial_tu_3-\partial_xu_3=0,
\ee
and supplement it with the reflection boundary conditions
\beq\label{ii2}
u_1(0,t)=u_3(0,t),\quad u_2(0,t)=u_3(0,t),\quad u_3(1,t)=u_1(1,t)-u_2(1,t).
\ee
One can easily check that for the unperturbed problem ($\eps=0$)  the condition \reff{U-smooth} is fulfilled with $k=3$,
while for the perturbed problem ($\eps\ne 0$) it is not fulfilled for any $k\in\N$. This follows from the following simple
calculations:
$$
\begin{array}{rcl}
(CPz)_1(x,t)&=& u_3 (0,t-x),
\\ (CPz)_2(x,t)&=& e^{\varepsilon x} (CPz)_1(x,t),\\
(CPz)_3(x,t)&=&u_1(1,t+x-1)-u_2(1,t+x-1),
\\
((CP)^2z)_1(x,t)&=& u_1(1,t-x-1)-u_2(1,t-x-1), 
\\ ((CP)^2z)_2(x,t)&=&e^{\varepsilon x} ((CP)^2z)_1(x,t),
\\ 
((CP)^2z)_3(x,t)&=&(1-e^{\varepsilon})u_3(0,t+x-2),
\\
((CP)^3z)_1(x,t)&=&(1-e^{\varepsilon})u_3(0,t-x-2), \\ ((CP)^3z)_2(x,t)&=&e^{\varepsilon x} ((CP)^3z)_1(x,t),
\\
((CP)^3z)_3(x,t)&=&(1-e^{\varepsilon})(u_1(1,t+x-3)-u_2(1,t+x-3)),
\end{array}
$$
where $z=(u_1,u_2,u_3)$.

\end{ex}

\section{Evolution families}\label{sec:abstr}
\renewcommand{\theequation}{{\thesection}.\arabic{equation}}
\setcounter{equation}{0}

In this section for the problem \reff{ss10}, (\ref{ss11}), (\ref{ss5})
 we 
prove the existence of an evolution family on 
$L^2(0,1)^n$  and show that it is eventually differentiable. 
Existence of evolution families 
 on the spaces of continuous  functions
is a complicated question because one has to take into account compatibility conditions 
between initial and boundary data.  The  point is that 
the first order compatibility conditions
 depend on the coefficients of the differential equations and 
are not the same for unperturbed and perturbed problems. 
This complication can be avoided (and we will follow this way)
working with
 evolution operators defined on  $L^2(0,1)^n$, where the compatibility conditions
do not play any role. Nevertheless, the main result (Theorem \ref{hyper} 
stating the exponential stability)
will be proved in both  $L^2(0,1)^n$ and  $C^1\left([0,1]\right)^n$-spaces. For the latter we
will use the result in $L^2(0,1)^n$ and  the smoothing property of the evolution families 
in the sense of Definition \ref{defn:smoothing_abstr}.

\subsection{A priori estimates}\label{sec:exist}

The following results about existence and uniqueness of continuous and classical solutions 
to the problem 
(\ref{ss10}), (\ref{ss11}), (\ref{ss5}) readily follow from
\cite{ijdsde,Km1}. 

\begin{thm}\cite{ijdsde,Km1}\label{km}
Suppose that the coefficients $a_j$ and $b_{jj}$  fulfill the 
  conditions  \reff{ss44}, \reff{ss4}, and \reff{ss8}. Let $\tau \in \R$ be 
 arbitrary fixed. 
If  $\varphi\in C^1([0,1])^n$ fulfills the zero order compatibility conditions  
\beq\label{eq:nl1}
\begin{array}{l}
\displaystyle
\varphi_j(0) = 
\sum\limits_{k=1}^mp_{jk}\varphi_k(1)+\sum\limits_{k=m+1}^np_{jk}\varphi_k(0),
\quad  1\le j\le m,
\nonumber\\
\displaystyle
\varphi_j(1) = \sum\limits_{k=1}^mp_{jk}\varphi_k(1)+\sum\limits_{k=m+1}^np_{jk}\varphi_k(0),
\quad   m< j\le n,
\nonumber
\end{array}
\ee
 and the 
first order compatibility conditions 
\beq\label{eq:nl2}
\begin{array}{l}
\displaystyle
\psi_j(0) = 
\sum\limits_{k=1}^mp_{jk}\psi_k(1)+\sum\limits_{k=m+1}^np_{jk}\psi_k(0),
\quad  1\le j\le m,
\nonumber\\
\displaystyle
\psi_j(1) = \sum\limits_{k=1}^mp_{jk}\psi_k(1)+ \sum\limits_{k=m+1}^np_{jk}\psi_k(0),
\quad   m< j\le n,
\nonumber
\end{array}
\ee
where
$$
\psi(x)= -(a(x,\tau)\partial_x + b(x,\tau))\varphi(x),
$$
then in $\Pi_\tau$ there exists a unique classical (continuously differentiable) solution $u(x,t)$ to the problem (\ref{ss10}), (\ref{ss11}), (\ref{ss5}). Moreover,  there are 
 constants $K_1\ge 1$ and $\om_1>0$ not depending on $\tau$, $t$, and $\varphi$ such that
\beq\label{eq:apr5}
\|u(\cdot,t)\|_{C^1([0,1])^n}\le K_1e^{\omega_1(t-\tau)}\|\vphi\|_{C^1([0,1])^n}\,\,\mbox{ for }\,\,
 t\ge\tau.
\ee
Given $c>0$, the constants $K_1$ and $\omega_1$ can be chosen the same for all 
$b_{jk}$ such that $\max_{j,k}\bigl\|b_{jk}\bigr\|_1<c$.
\end{thm}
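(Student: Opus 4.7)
The plan is to use the integral representation (\ref{rep1})--(\ref{Q}) along characteristics as the working form of the problem, and reduce the statement to a Picard/Gronwall argument, with the delicate point being the regularity at the two corners $(0,\tau)$ and $(1,\tau)$.

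First I would establish existence and uniqueness of a continuous solution. The right-hand side of (\ref{rep1}) defines an affine operator on $C(\overline{\Pi}_\tau\cap\{t\le\tau+T\})^n$ whose linear part is a contraction for $T>0$ sufficiently small: the integral term contributes a factor of order $T\cdot \max_{j,k}\|b_{jk}/a_j\|_\infty$ while the $Q$-term contributes a factor controlled by $\|c_j\|_\infty\cdot\|P\|$, which itself is bounded because $c_j$ depends only on $b_{jj}/a_j$ integrated over an interval of length at most $1$. Banach's fixed point theorem then yields a unique continuous solution on $[\tau,\tau+T]$. A sup-norm a priori estimate, obtained by plugging the supremum into (\ref{rep1}) and applying Gronwall's lemma to
$$
\|u(\cdot,t)\|_\infty\le C_0\|\vphi\|_\infty + C_0\|P\|\sup_{s\le t}\|u(\cdot,s)\|_\infty + C_0\int_\tau^t \|u(\cdot,s)\|_\infty\,ds,
$$
gives global existence by continuation and a bound of the form $\|u(\cdot,t)\|_\infty\le Ke^{\om_0(t-\tau)}\|\vphi\|_\infty$, with $K$, $\om_0$ depending only on the $BC$-norms of $a_j$, $b_{jk}$, and on $\max_{j,k}|p_{jk}|$.

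The main obstacle is the $C^1$-regularity at the corners, and this is precisely where the compatibility conditions are needed. The natural way to handle it is to introduce $v=\d_t u$ (the value $\psi$ at $t=\tau$ being the initial datum for $v$, by the PDE) and to observe that $v$ formally satisfies a linear hyperbolic system of exactly the same form as (\ref{ss10}), with boundary conditions obtained by differentiating (\ref{ss5}) in $t$, which have again the form (\ref{ss5}) with the same reflection coefficients $p_{jk}$. The zero-order compatibility conditions (\ref{eq:nl1}) guarantee that the continuous solution $u$ constructed above matches the boundary data continuously across the characteristics emanating from the corners; the first-order conditions (\ref{eq:nl2}) are exactly the zero-order compatibility conditions for the $v$-system. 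Applying the continuous solvability result already established to the $v$-system yields a continuous $v=\d_t u$, and then $\d_x u=-a^{-1}(v+b u)$ is continuous as well by (\ref{ss10}) and the lower-bound (\ref{ss4}) on $|a_j|$. This gives $u\in C^1(\overline{\Pi}_\tau)^n$, and uniqueness is inherited from the continuous-solution uniqueness.

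Finally, the $C^1$ estimate (\ref{eq:apr5}) follows by applying the same Gronwall argument to the system for $v$: since $\|v(\cdot,\tau)\|_\infty\le C\|\vphi\|_{C^1}$ with $C$ depending only on $\max_{j,k}\|b_{jk}\|_\infty$ and $\|a_j\|_\infty$, the exponential bound for $v$ combined with $\d_x u=-a^{-1}(v+bu)$ produces an estimate of the form (\ref{eq:apr5}) with constants $K_1$, $\om_1$ depending only on $\max_{j,k}\|b_{jk}\|_1$, $\|a_j\|_1$, and $\max_{j,k}|p_{jk}|$. In particular, for any $c>0$ these constants can be taken uniform over all $b$ with $\max_{j,k}\|b_{jk}\|_1<c$, as claimed. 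The only non-routine part of the whole argument is the bookkeeping at the corners described above; the rest is standard Picard iteration and Gronwall estimation.
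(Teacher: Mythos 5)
A preliminary remark: the paper does not prove Theorem \ref{km} at all --- it is quoted from \cite{ijdsde,Km1} --- so there is no in-paper proof to compare with. Your architecture is, however, the same as the one used in those references: continuous solvability via the integral system \reff{rep1}--\reff{Q} along characteristics, then $C^1$-regularity by differentiating the problem in $t$ and using the compatibility conditions. Your handling of the corners is correct: \reff{eq:nl1} gives continuity of $u$ across the characteristics emanating from $(0,\tau)$ and $(1,\tau)$, and \reff{eq:nl2} is precisely the zero-order compatibility condition for the system satisfied by $v=\d_t u$. One small imprecision there: the equation obtained by differentiating \reff{ss10} in $t$ contains $\d_x u=-a^{-1}(v+bu)$, so $v$ does not satisfy a closed system of the form \reff{ss10} by itself; one must run the continuous-solvability argument for the coupled $2n\times 2n$ system in $(u,v)$, which is again of the admissible form.

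The genuine gap is in the existence step. You claim the linear part of the fixed-point map is a contraction for small $T$, with the $Q$-term ``contributing a factor controlled by $\|c_j\|_\infty\cdot\|P\|$''. Bounded is not a contraction: writing the linear part as $L=Q_0+D$, where $Q_0u=c_j\,(Pu)_j(\om_j(x_j(x,t),x,t))$ on the set where the backward characteristic meets the lateral boundary, the norm of $Q_0$ is of order $\sup|c_j|\cdot\max_j\sum_k|p_{jk}|$ and does not shrink as $T\to 0$ (the set on which $Q_0$ acts nontrivially shrinks, but the sup-norm does not see that). Since the $p_{jk}$ are arbitrary reals, $L$ need not be a contraction for any $T$. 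The same defect afflicts your Gronwall inequality: the term $C_0\|P\|\sup_{s\le t}\|u(\cdot,s)\|_\infty$ cannot be absorbed into the left-hand side when $C_0\|P\|\ge 1$, and no estimate insensitive to the size of the $p_{jk}$ can be true (compare the instability examples of Section 3 for large coefficients). The missing idea is the reflection delay: $(Pu)_j$ evaluates $u$ at the opposite lateral boundary, so each application of $Q_0$ composed with the characteristic flow moves backward in time by at least $\delta_0=1/\max_j\sup_{\overline\Pi}|a_j|>0$. Hence $Q_0^2=0$ on a time strip of height less than $\delta_0$, so $L^2=Q_0D+DQ_0+D^2$ has norm $O(T)$ there and Banach's theorem applies to the second iterate of the map; the a priori sup-norm bound must then be propagated strip by strip of width $\delta_0$, which is exactly what produces the exponential rate $\om_1$ (depending on $\max_{j,k}|p_{jk}|$ roughly through $\delta_0^{-1}\log$ of the constants). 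With this repair the remainder of your argument, including the uniformity of $K_1,\om_1$ over $\max_{j,k}\|b_{jk}\|_1<c$, goes through.
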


\begin{lem}\label{nn0}  If  $u(x,t)$ is a classical solution 
to the problem  (\ref{ss10}), (\ref{ss11}), (\ref{ss5}), then it fulfills the estimate
\beq\label{eq:apr2}
\|u(\cdot,t)\|_{L^2(0,1)^n}\le K_2e^{\omega_2(t-\tau)}\|\vphi\|_{L^2(0,1)^n}\,\,\mbox{ for }\,\,
 t\ge\tau
\ee
with some constants  $K_2\ge 1$ and $\omega_2>0$  not depending on  $\tau$, $t$, and $\varphi$. 
\end{lem}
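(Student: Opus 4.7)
The plan is to prove the estimate by a weighted $L^2$ energy argument. The obstruction to a straightforward energy identity is that, for general reflection coefficients $p_{jk}$, the boundary contribution arising from integration by parts in the transport operator need not be dissipative. I would overcome this by inserting an exponential weight that damps the \emph{incoming} boundary values so that they can be absorbed by the genuinely dissipative \emph{outgoing} ones; the price is only a larger constant in the bulk, which is harmless since the lemma only asks for the existence of \emph{some} $K_2$, $\omega_2$.

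Concretely, I would set
\[
E(t)=\sum_{j\le m}\int_0^1 e^{\mu(x-1)}u_j(x,t)^2\,dx+\sum_{j>m}\int_0^1 e^{-\mu x}u_j(x,t)^2\,dx,
\]
for a parameter $\mu>0$ to be fixed later. By construction each weight $w_j(x)$ lies in $[e^{-\mu},1]$, so $e^{-\mu}\|u(\cdot,t)\|^2_{L^2(0,1)^n}\le E(t)\le\|u(\cdot,t)\|^2_{L^2(0,1)^n}$. Differentiating $E$ along a classical solution, substituting \reff{ss10}, and integrating by parts in the transport term gives
\[
\frac{dE}{dt}=-\sum_j\bigl[w_ja_ju_j^2\bigr]_0^1+\sum_j\int_0^1\partial_x(w_ja_j)\,u_j^2\,dx-2\sum_{j,k}\int_0^1 w_jb_{jk}u_ju_k\,dx.
\]

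The heart of the argument is the boundary term. With the chosen weights, each outgoing endpoint ($x=1$ for $j\le m$, $x=0$ for $j>m$) carries $w_j=1$ and $|a_j|\ge\Lambda_0$, producing a dissipative contribution of at most $-\Lambda_0\,Z(t)$, where $Z(t):=\sum_{k\le m}u_k(1,t)^2+\sum_{k>m}u_k(0,t)^2$ is the squared outgoing trace. At each incoming endpoint the weight is $w_j=e^{-\mu}$, and the reflection conditions \reff{ss5} combined with Cauchy--Schwarz bound every incoming trace pointwise by $C_P\,Z(t)$, with $C_P$ depending only on $n$ and the $p_{jk}$. Thus the incoming contribution is at most $C_P e^{-\mu}\|a\|_\infty Z(t)$, and by choosing $\mu$ large enough that $C_P e^{-\mu}\|a\|_\infty\le\Lambda_0/2$, the full boundary term becomes nonpositive. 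The two bulk terms are then routine: using $|\partial_x(w_ja_j)|\le w_j(\mu\|a\|_\infty+\|a\|_1)$ the first is bounded by $(\mu\|a\|_\infty+\|a\|_1)E(t)$, while Cauchy--Schwarz together with $w_j\le 1$ and $\|u\|^2_{L^2(0,1)^n}\le e^\mu E(t)$ gives at most $2ne^\mu\|b\|_\infty E(t)$ for the second.

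Summing up, $dE/dt\le\omega\,E(t)$ for some constant $\omega$ depending on $\mu,\|a\|_1,\|b\|_\infty$; Gronwall's inequality yields $E(t)\le e^{\omega(t-\tau)}E(\tau)\le e^{\omega(t-\tau)}\|\vphi\|^2_{L^2(0,1)^n}$, and returning to the unweighted norm gives $\|u(\cdot,t)\|^2_{L^2(0,1)^n}\le e^\mu e^{\omega(t-\tau)}\|\vphi\|^2_{L^2(0,1)^n}$. Taking $K_2=e^{\mu/2}$ and $\omega_2=\omega/2$ establishes \reff{eq:apr2}; since $\mu$ and $\omega$ depend only on the global quantities $\|a\|_1$, $\|b\|_\infty$, the $p_{jk}$, $\Lambda_0$, and $n$, these constants are independent of $\tau$, $t$, and $\vphi$, as required. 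The principal difficulty is exactly the non-dissipative boundary contribution; once the right weights have been identified to neutralize it, the rest is standard Gronwall bookkeeping.
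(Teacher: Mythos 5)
Your proof is correct and follows essentially the same strategy as the paper: an $L^2$ energy/Gronwall argument in which weights, small at the incoming endpoints and of order one at the outgoing ones, render the boundary contribution nonpositive by absorbing the reflected traces into the dissipative outgoing terms. The only (cosmetic) difference is that the paper realizes these weights through a change of dependent variables $v_j=\mu_j u_j$ that reduces the problem to the dissipative case, whereas you build them directly into the energy functional $E(t)$.
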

\begin{proof} The proof is based on the argument from \cite{god}
used to get a priori estimates
for initial-boundary value problems for
first order hyperbolic systems, now for decoupled boundary conditions. 
Let $u=u(x,t)$  be a classical solution to the problem under consideration.  
Take a scalar product of \reff{ss10} with $u$ and integrate the resulting system over the rectangle
$\Pi_\tau^t=\{(x,\theta)\,:\,0 < x < 1, \tau < \theta < t\}$. We get
$$
\int\int_{\Pi_\tau^t}\left(\frac{\partial }{\partial \theta}(u,u)+\frac{\partial }{\partial x}(au,u)\right)\,\, dx d\theta=
\int\int_{\Pi_\tau^t}\left(-2(bu,u)+(a_x u,u)\right)\, dx d\theta,
$$
where  $(\cdot,\cdot)$ denotes 
the scalar
product
 in $\R^n$.
  Applying  Green's formula to the left hand side, we obtain  
\beq\label{god04}
\begin{array}{cc}
\displaystyle\|u(\cdot,t)\|_{L^2(0,1)^n}^2+\int_\tau^t \left(\sum_{j=1}^n a_j(1,\theta)
 u_j^2(1,\theta)-\sum_{j=1}^n 
a_j(0,\theta) u_j^2(0,\theta)\right)\,d\theta\\
\displaystyle=\|\vphi\|^2_{L^2(0,1)^n}+\int\int_{\Pi_\tau^t}\left(-2(bu,u)+(a_x u,u)\right)\, 
dxd\theta.
\end{array}
\ee

Suppose first that the boundary  
conditions (\ref{ss5}) are dissipative, i.e.
\beq\label{god1}
\sum\limits_{j=1}^ma_j(1,t)u_j^2(1,t)-\sum\limits_{j=m+1}^na_j(0,t)u_j^2(0,t)+\sum\limits_{j=m+1}^na_j(1,t)(Pu)_j(t)^2-\sum\limits_{j=1}^ma_j(0,t)(Pu)_j(t)^2\ge 0.
\ee 
Then from  \reff{god04} we have
\begin{eqnarray*}
\displaystyle\|u(\cdot,t)\|_{L^2(0,1)^n}^2&\le& 
\displaystyle\|\vphi\|^2_{L^2(0,1)^n}+\int\int_{\Pi_\tau^t}\left|\left((a_x-2b)u,u\right)\right|\,dx d\theta\\
&\le& \displaystyle\|\vphi\|^2_{L^2(0,1)^n}+
\be\int_\tau^t I(\theta)\, d\theta,
\end{eqnarray*}
where $I(t)=\|u(\cdot, t)\|^2_{L^2(0,1)^n}=\int_0^1 (u,u)\, dx$, $\be=n\max_{i,j} \|(a_x-2b)_{ij}\|_\infty$,
and $(a_x-2b)_{ij}\in BC(\Pi)$ are entries of the matrix $a_x-2b$. 
Applying  Gronwall's argument to the inequality 
$$
I(t)\le \|\vphi\|^2_{L^2(0,1)^n}+ \be \int_\tau^t I(\theta)\,d\theta,
$$ 
we come to the estimate 
 $\|u(\cdot,t)\|_{L^2(0,1)^n}\le e^{\omega_2(t-\tau)}\|\vphi\|_{L^2(0,1)^n}$
with  constant  $\omega_2=\frac{\be}{2}$ depending on the coefficients
of the system (\ref{ss10}), (\ref{ss5}) but not on $\vphi$. 

To complete the proof, it remains to show that the inequality \reff{god1}, 
supposed above, causes no loss of generality.
Let  $\mu_i(x,t)$ be arbitrary smooth functions
satisfying the conditions
$$
\inf\limits_{\overline\Pi_\tau}|\mu_j|>0, \quad \sup\limits_{\overline\Pi_\tau}|\mu_j|<\infty
\quad\mbox{ for all } j\le n.
$$
The change of each variable 
$u_j$ to $v_j=\mu_ju_j$
brings the  system (\ref{ss10})  to
\begin{equation}\label{god2}
\partial_tv_j  + a_j(x,t)\partial_xv_j - \frac{\d_t\mu_j+a_j(x,t)\d_x\mu_j}{\mu_j}v_j + 
\sum\limits_{k=1}^nb_{jk}\frac{\mu_j}{\mu_k}v_k = 0
\end{equation}
and the boundary conditions (\ref{ss5}) to
\beq\label{god3}
\begin{array}{l}
\displaystyle
v_j(0,t) = 
\sum\limits_{k=1}^mp_{jk}\frac{\mu_j(0,t)}{\mu_k(1,t)}v_k(1,t)+
\sum\limits_{k=m+1}^np_{jk}\frac{\mu_j(0,t)}{\mu_k(0,t)}v_k(0,t),
\quad 1\le j\le m,
\nonumber\\
\displaystyle
v_j(1,t) =\sum\limits_{k=1}^mp_{jk}\frac{\mu_j(1,t)}{\mu_k(1,t)}v_k(1,t)+
 \sum\limits_{k=m+1}^np_{jk}\frac{\mu_j(1,t)}{\mu_k(0,t)}v_k(0,t),
\quad   m< j\le n.
\nonumber
\end{array}
\ee
Note that the  resulting system
 \reff{god2}, \reff{god3} is of the type  (\ref{ss10}), (\ref{ss5}), and  the  
inequality \reff{god1} for it reads
\beq\label{god4}
\begin{array}{ll}
\displaystyle
\sum\limits_{j=1}^ma_j(1,t)v_j^2(1,t)-\sum\limits_{j=m+1}^na_j(0,t)v_j^2(0,t)\\
+\displaystyle\sum\limits_{j=m+1}^na_j(1,t)
\left[
\sum\limits_{k=m+1}^np_{jk}\frac{\mu_j(1,t)}{\mu_k(0,t)}v_k(0,t)+
\sum\limits_{k=1}^mp_{jk}\frac{\mu_j(1,t)}{\mu_k(1,t)}v_k(1,t)
\right]^2\\
\displaystyle
-\displaystyle\sum\limits_{j=1}^ma_j(0,t)
\left[
\sum\limits_{k=m+1}^np_{jk}\frac{\mu_j(0,t)}{\mu_k(0,t)}v_k(0,t)+
\sum\limits_{k=1}^mp_{jk}\frac{\mu_j(0,t)}{\mu_k(1,t)}v_k(1,t)
\right]^2
\ge 0.
\end{array}
\ee 
One can easily see that the functions  $\mu_j$ can be chosen so that  the left hand side of \reff{god4}  
is a non-negative definite quadratic form with respect to
 $v_j(1,t),$ $j\leq m$ and $v_j(0,t),$ $m+1\leq j\leq n$. Indeed, since $a_j>0$ for $j\le m$ and $a_j<0$ for $m+1\le j\le n$ by the assumption
\reff{ss4},  the first line of \reff{god4} is a positive  definite quadratic form and 
 the last two lines are a non-negative  definite quadratic form. Now we choose the functions  $\mu_j$ so that 
$\mu_j(0,t)$ for $m+1\le j\le n$ and $\mu_j(1,t)$ for $j\le m$ are so large, while
$\mu_j(1,t)$ for $m+1\le j\le n$ and $\mu_j(0,t)$ for $j\le m$ are so small 
that the whole expression in \reff{god4} is a  non-negative  definite quadratic form.

This completes the proof.
\end{proof}

\subsection{Existence of evolution families (proof of Theorem \ref{evol})}

 Theorem  \ref{km} guarantees the existence and uniqueness of the classical solution to the problem
  (\ref{ss10}), (\ref{ss11}), (\ref{ss5})  for any $\tau\in\R$ and any initial function 
$\varphi \in C^\infty_0([0,1])^n$.
Fix  arbitrary $\tau\in\R$, $\vphi\in L^2(0,1)^n$, and a
sequence  $\vphi^l\in C_0^\infty([0,1])^n$
such that   $\vphi^l\to\vphi$ in $L^2(0,1)^n$. Let $u^l$ denote  
the continuously differentiable solutions to the problem
 (\ref{ss10}), (\ref{ss11}), (\ref{ss5}) with $\vphi(x)=\varphi^l(x)$.
The bound (\ref{eq:apr2}) implies that
$$
\max_{\tau \le \theta \le t}\|u^m(\cdot,\theta)-u^l(\cdot,\theta)\|_{L^2(0,1)^n} \to 0
\mbox{ as } \, m,l \to\infty$$
 for each  $t>\tau$.
It follows that, given $t>\tau$, the sequence  $u^l$  converges in $C([\tau,t], L^2(0,1))^n$. 
Moreover,  (\ref{eq:apr2}) ensures that the limit function $u$ 
does not depend on the choice of 
 $\vphi^l$. 

Define $U(t,\tau)\varphi=u(\cdot,t)$. Thus, 
$U(t,\tau)\varphi\in L^2(0,1)^n$ for each $t\geq\tau$ and  $\varphi \in L^2(0,1)^n$. 
Since the classical solution to  (\ref{ss10}), (\ref{ss11}), (\ref{ss5})  is unique, the family
 $\{U(t,\tau)\}_{t\ge\tau}$  fulfills the first property  in Definition \ref{def_evolfam}. 
Moreover, the estimate  (\ref{eq:apr2}) entails the 
exponential bound
 $$
\|U(t,\tau)\|_{\LL(L^2([0,1])^n)}\le K_2e^{\omega_2(t-\tau)},\quad t\ge\tau. 
$$
as well as the second property in Definition \ref{def_evolfam}.
More specifically,   the strong continuity of $U(t,\tau)$ in $t$ (and similarly in $\tau$)
follows from the convergence
\begin{eqnarray*}
\|U(t,\tau)\vphi-U(\tilde t,\tau)\vphi\|_{L^2(0,1)^n} = \|(U(t,\tilde t)-I)U(\tilde t,\tau)\vphi\|_{L^2(0,1)^n}  \to 0
\mbox{ as } \, t \to\tilde t \quad(t\ge\tilde t\ge\tau)
\end{eqnarray*}
 for all $\vphi\in L^2(0,1)^n$. To prove this convergence, it is sufficient 
to note that $U(\tilde t,\tau)\vphi\in L^2(0,1)^n$ and to show that
\beq\label{21}
\|U(t,\tau)\vphi-\vphi\|_{L^2(0,1)^n} \to 0
\mbox{ as } \, t \to\tau \quad(t\ge\tau)\qquad 
\ee
for all $\vphi\in L^2(0,1)^n$.
To this end, given $\vphi\in L^2(0,1)^n$, take an arbitrary 
sequence  $\vphi^l\in C_0^\infty([0,1])^n$
such that   $\vphi^l\to\vphi$ in $L^2(0,1)^n$ as $l\to\infty$. Then
\begin{eqnarray*}
\lefteqn{
\|U(t,\tau)\vphi-\vphi\|_{L^2(0,1)^n}
}\\
&& \le \|U(t,\tau)\vphi-U(t,\tau)\vphi^l\|_{L^2(0,1)^n}+\|U(t,\tau)\vphi^l-\vphi^l\|_{L^2(0,1)^n}+
\|\vphi^l-\vphi\|_{L^2(0,1)^n}.
\end{eqnarray*}
As $l\to\infty$, the first summand in the right-hand side tends to zero by (\ref{eq:apr2}). The second 
summand tends to zero because 
$U(t,\tau)\vphi^l$ for each $l$ is the classical solution. The third one tends to zero by the choice of 
$\vphi^l$. This 
completes the proof of \reff{21} and, therefore, the strong continuity of $U(t,\tau)$ in $t$, as desired.

Therefore, $\{U(t,\tau)\}_{t\ge\tau}$  determines an exponentially bounded evolution family generated by 
the problem (\ref{ss10}), (\ref{ss11}) in the sense of Definitions \ref{def_evolfam}
and \ref{def_eo}.
Theorem \ref{evol} is therewith proved.

This proof motivates the following  definition
of an  $L^2$-generalized solution to the problem
 (\ref{ss10}), (\ref{ss11}), (\ref{ss5}). 

 \begin{defn}
Given $\vphi\in L^2(0,1)^n$, let  $\vphi^l\in C_0^\infty([0,1])^n$ be an arbitrary fixed sequence 
such that   $\vphi^l\to\vphi$ in $L^2(0,1)^n$.
A function $u\in C\left([\tau,\infty), L^2(0,1)\right)^n$ is called 
an {\rm $L^2$-generalized  solution} to the problem (\ref{ss10}), (\ref{ss11}), (\ref{ss5}) 
if the sequence 
of  continuously differentiable solutions $u^l(x,t)$ to the problem
(\ref{ss10}), (\ref{ss11}), (\ref{ss5}) with
 $\vphi(x)=\vphi^l(x)$  fulfills the convergence
$$
\|u(\cdot,\theta)-u^l(\cdot,\theta)\|_{L^2(0,1)^n} \to_{l\to\infty} 0,
$$
uniformly in $\theta$ varying in the range $\tau\le\theta\le t$, for every $t>\tau$.
\end{defn}

\subsection{Smoothing property}\label{sec:smoothing}

In this section we  show that,
under  the conditions \reff{U-smooth} and (\ref{cass}), the $L^2$-generalized 
solutions to (\ref{ss10}), (\ref{ss11}), (\ref{ss5})
 become continuous in a finite time.
 Furthermore,
the time at which the
solutions 
reach the $C$-regularity does not exceed the value $\tau+T_0$ for  a fixed number $T_0>0$, whatsoever the initial 
time $\tau\in\R$ and the initial function $\vphi\in L^2(0,1)^n$. 
Furthermore,  the function $u(x,t)=[U(t,\tau)\vphi](x)$
satisfies
 the zero order compatibility conditions at points $(0,t)$ and $(1,t)$.  We summarize
 this in the following 
lemma. Let $Y_0$ denote the subspace of $C([0,1])^n$
of functions satisfying the zero-order compatibility conditions \reff{eq:nl1}.

\begin{lem}\label{lem:d}
Suppose that the conditions  (\ref{ss44}), (\ref{ss4}), (\ref{ss8}), \reff{U-smooth}, (\ref{cass})  
 are fulfilled.  Then the evolution family
$\{U(t,\tau)\}_{t\ge\tau}$ generated by the problem
(\ref{ss10}), (\ref{ss5})  is  smoothing  from
  $L^2\left(0,1\right)^n$  to
$Y_0$. 
\end{lem}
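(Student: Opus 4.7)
The plan is to combine the characteristic integral representation \reff{rep1} with the boundary-reflection hypothesis \reff{U-smooth} and the Levy-type compensation \reff{cass}. Intuitively, \reff{U-smooth} kills the ``direct'' boundary-trace part of the solution after $k$ reflections, and \reff{cass} makes the residual coupling integral operator smoothing from $L^2$ to $C$. Passing to the limit then upgrades the $L^2$-valued evolution to a $C$-valued one, and the reflection conditions transfer to the limit by uniform convergence, placing $U(t,\tau)\vphi$ in $Y_0$.

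First, I regularize and iterate. Fix $\tau\in\R$ and $\vphi\in L^2(0,1)^n$, and choose $\vphi^l\in C_0^\infty([0,1])^n$ with $\vphi^l\to\vphi$ in $L^2$. By Theorem~\ref{km} each $\vphi^l$ produces a classical solution $u^l$ satisfying \reff{rep1}. For $t-\tau>1/\Lambda_0$ the initial-data branch of $Q$ in \reff{Q} disappears and \reff{rep1} reads $u^l=CPu^l+R[u^l]$, where $R[u]_j(x,t)=-\int_{x_j(x,t)}^x d_j(\xi,x,t)\sum_{k\ne j}b_{jk}(\xi,\om_j)u_k(\xi,\om_j)\,d\xi$. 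Iterating this identity $k$ times and invoking $(CP)^kv\equiv 0$ for every $v\in C(\overline{\Pi})^n$, I obtain, for $t>\tau+k/\Lambda_0$,
\[
u^l(\cdot,t)=\sum_{i=0}^{k-1}(CP)^iR[u^l](\cdot,t).
\]

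The second step is the key smoothing bound
\[
\|R[u^l](\cdot,t)\|_{C([0,1])^n}\le K\sup_{\theta\in[t-T_0,t]}\|u^l(\cdot,\theta)\|_{L^2(0,1)^n}
\]
for some fixed $T_0,K>0$. Substituting $u_k$ in $R[u^l]_j$ by its own representation \reff{rep1} along the $k$-th characteristic decomposes $R[u^l]_j$ into (i)~a one-dimensional integral involving $CPu^l$ along the $j$-th characteristic, which is manifestly continuous in $(x,t)$, and (ii)~iterated integrals whose inner variable runs along a $k$-th characteristic transverse to the $j$-th one. Hypothesis \reff{cass} supplies a factor $(a_k-a_j)$ in the integrand, which precisely cancels the inverse Jacobian of the change of variables from the iterated parameters $(\xi,\eta)$ to ambient coordinates $(y,\theta)\in\Pi$; a direct computation using \reff{char} shows that this Jacobian is proportional to $(a_k-a_j)/(a_ja_k)$, which vanishes exactly in the degenerate case $a_j=a_k$. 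The iterated integrals thus collapse to genuine two-dimensional integrals of $u$ against bounded kernels, to which Cauchy--Schwarz and the energy bound \reff{eq:apr2} yield the displayed $L^\infty$-estimate. Since $CP\in\LL(C(\overline{\Pi})^n)$, the analogous bound propagates to every term $(CP)^iR[u^l]$.

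Applying this estimate to the difference $u^l-u^m$, which solves the same linear problem with data $\vphi^l-\vphi^m\to 0$ in $L^2$, together with Theorem~\ref{evol}, shows that $\{u^l(\cdot,t)\}$ is Cauchy in $C([0,1])^n$ for every $t>\tau+k/\Lambda_0$; its limit is necessarily $U(t,\tau)\vphi$, so $U(t,\tau)\vphi\in C([0,1])^n$. The reflection conditions \reff{ss5} hold pointwise for each classical $u^l(\cdot,t)$ and transfer to the limit by uniform convergence, giving $U(t,\tau)\vphi\in Y_0$. The main obstacle is making Step~2 rigorous: restrictions of $L^2$-functions to one-dimensional characteristic curves are a priori meaningless, so all manipulations must be carried out pointwise on the $C^1$-approximations $u^l$ and then expressed through two-dimensional $L^2$-norms that survive the passage to the limit. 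The compensating factor $(a_k-a_j)$ from \reff{cass} is precisely what licences the change of variables converting a 1D trace integral into a 2D bulk integral.
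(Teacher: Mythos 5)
Your overall skeleton matches the paper's: approximate $\vphi$ by $\vphi^l\in C_0^\infty$, iterate the characteristic representation \reff{rep1} so that \reff{U-smooth} kills the pure reflection part, and use \reff{cass} to cancel the Jacobian degeneracy in a change of variables that turns iterated line integrals into two-dimensional bulk integrals controlled by $\|u^l\|_{C([\cdot],L^2)}$. Your treatment of the term $D^2u^l$ (your case (ii)) is exactly the paper's Claim~1. But there is a genuine gap in your Step~2, in the term you label (i). When you substitute \reff{rep1} for $u_k$ inside $R[u^l]_j$, the boundary branch of $Q$ produces the term $D(CP)u^l$, which after the \reff{cass}-licensed change of variables $\xi\mapsto s=\om_k(x_k,\xi,\om_j(\xi))$ becomes a one-dimensional integral in time of the \emph{raw boundary traces} $u^l_r(0,s)$, $u^l_r(1,s)$. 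Calling this ``manifestly continuous'' is true for each fixed classical $u^l$, but it does not yield the uniform bound $\|R[u^l]\|_{C}\le K\sup_\theta\|u^l(\cdot,\theta)\|_{L^2(0,1)^n}$ that you need to pass to the limit: a time integral of a boundary trace is not controlled by the sup-in-time of interior slice $L^2$-norms (one would need an additional hidden-regularity estimate for the traces, which you neither state nor prove). So the key estimate of Step~2, and hence the Cauchy property of $u^l(\cdot,t)$ in $C([0,1])^n$, does not follow as written.

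The paper avoids this by iterating differently: it first derives $u^l|_{\overline\Pi_{\tau+d}}=\sum_{i=0}^{k-1}(CP)^iDu^l$ and then substitutes this \emph{whole} identity back into the inner argument of $D$ on $\overline\Pi_{\tau+2d}$, obtaining $u^l=\sum_{i}(CP)^iD\sum_{j}(CP)^jDu^l$. Every surviving term then contains \emph{two} copies of $D$, so the only boundary traces that ever appear are traces of $Du^l$ --- already an integral over $\xi$ --- and the composite $D(CP)^jDu^l$ collapses, after the \reff{cass} change of variables (Claims~1 and~2 and formula \reff{DSD}), into a genuine two-dimensional integral of $u^l$ against a bounded kernel, estimated by $\sup_\theta\int_0^1|u^l|\,d\eta\le\sup_\theta\|u^l(\cdot,\theta)\|_{L^2}$. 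To repair your argument you should either adopt this double-$D$ iteration, or supply and prove an $L^2_t$ boundary-trace estimate for $L^2$-generalized solutions (a nontrivial addition extracted from the energy identity \reff{god04}).
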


The proof develops  the ideas of
\cite{kmit,Km}
where
 the smoothing property is proved from $Y_0$ to $C^1$, and
 it is shown  that the solutions reach the $C^k$-regularity in a finite time for each $k$.
Here we  extend the smoothing results to the case where the initial 
data are $L^2$-functions only.

\begin{proof} 
 From Theorem \ref{evol}  it follows that for all $\tau\in\R$ and $\vphi\in L^2(0,1)^n$ the problem
 (\ref{ss10}), (\ref{ss11}), (\ref{ss5}) has a unique    $L^2$-generalized solution $u$.
Note that
$u\in C\left([\tau, \infty), L^2(0,1)\right)^n$. It suffices to show that 
$u\in C\left(\overline\Pi_{\tau+T_0}\right)^n$ 
 for some $T_0>0$ not depending on $\tau$ and $\varphi$.

Fix an arbitrary $\tau\in\R$ and $\vphi\in L^2(0,1)^n$.
Due to \reff{U-smooth}, we can fix $d>0$ such that 
 $\left((CP)^ku\right)(x,t)= 0$ for all 
$(x,t)\in\overline\Pi_{\tau+d}$.
Fix  an arbitrary sequence of functions $\vphi^l\in C_0^\infty([0,1])^n, \, l\in\N,$
such that   $\vphi^l\to\vphi$ in $L^2(0,1)^n$. Let  $u^l$ denote
the classical solutions to the problem
 (\ref{ss10}), (\ref{ss11}), (\ref{ss5}) with $\vphi(x)=\varphi^l(x)$.
Due to the bound (\ref{eq:apr2}),
$
u^l\to u \mbox{ in}\,\,  C([\tau,\theta], L^2(0,1))^n \mbox{ as } \, l\to\infty
$
for any $\theta>\tau$. In order to prove the lemma, it is sufficient to show that 
$$
u^l \mbox{ converges in }  C\left(\overline\Pi_{\tau+2d}^{\tau+2d+\al}\right)^n
\mbox{ as } l\to\infty,
$$
for any $\al>0$.  Here and below,  given $\be$ and $\ga$ such that $\be<\ga$,  we use the notation 
$
\Pi_\be^\ga=\Pi_{\be}\setminus\overline\Pi_{\ga}
$.

Given $\al>0$,
let a  linear bounded operator 
$D:  C\left(\overline\Pi_\tau^{\tau+2d+\al}\right)^n \mapsto  C\left(\overline\Pi_\tau^{\tau+2d+\al}\right)^n$
 be  defined  by the formula
\begin{eqnarray}\label{D}
\left(Dw\right)_j(x,t) = 
\displaystyle-\int_{x_j(x,t)}^x d_j(\xi,x,t)\sum_{k\not=j}b_{jk}(\xi,\om_j(\xi))
w_k(\xi,\om_j(\xi))\,d\xi,\quad j\le n,
\end{eqnarray}
where the functions  $\om_j(\xi,x,t)$ are given by (\ref{char}). For simplicity, 
here and in what follows we use the notation
 $\om_j(\xi)=\om_j(\xi,x,t)$.  Moreover,  we drop the dependence of
$D$ on $\al$, as throughout the proof
$\al$ is 
arbitrary fixed.

Accordingly to (\ref{rep1}), the solution $u^l\in  C^1\left(\overline\Pi_{\tau}\right)^n$
 satisfies the operator equation
\begin{eqnarray}
\label{tau+d}
u^l=Qu^l+Du^l.
\end{eqnarray}
In particular,
\begin{eqnarray}
\label{rep_u0}
u^l\big|_{\overline\Pi_{\tau+d}}=CPu^l+Du^l.
\end{eqnarray}
 Putting \reff{rep_u0} into the first summand in  \reff{rep_u0},
we get
\begin{eqnarray*}
u^l\big|_{\overline\Pi_{\tau+d}}=(CP)^2u^l+(I+CP)Du^l.
\end{eqnarray*}
Iterating this, that is, substituting \reff{rep_u0} into the last  equation once and once again, 
in the $k$-th step we meet the property \reff{U-smooth}
and get the formula
\begin{eqnarray}\label{rep_u1}
u^l\big|_{\overline\Pi_{\tau+d}}=\sum_{i=0}^{k-1}(CP)^{i}Du^l.
\end{eqnarray}
Since $u^l$ occurs in both sides of \reff{rep_u1}, this equation can be iterated. Note that $D$
operates with $u^l$ on a different (shifted) domain. Hence, such iteration is possible only on a subdomain
of $\overline\Pi_{\tau+d}$. This is possible on $\overline\Pi_{\tau+2d}$ and, doing so, we obtain
\begin{eqnarray}\label{rep_u11}
u^l\big|_{\overline\Pi_{\tau+2d}}=
\sum_{i=0}^{k-1}(CP)^{i}D\sum_{j=0}^{k-1}(CP)^{j}D u^l= \sum_{i=0}^{k-1}(CP)^{i}D\sum_{j=1}^{k-1}(CP)^{j}D u^l+\sum_{i=0}^{k-1}(CP)^{i}D^2 u^l.
\end{eqnarray}
We now have  to prove that  the right-hand side of \reff{rep_u11} converges in 
 $C\left(\overline\Pi_{\tau+2d}^{\tau+2d+\al}\right)^n$  as $l~\to~\infty$ for all $\al>0$.
Fix an arbitrary $\al>0$. It  suffices to show that 
\beq\label{conv_all}
 \mbox{ the sequences }  D(CP)^iDu^l \,\, \mbox{and} \,\, D^2u^l 
 \mbox{ converge in } C\left(\overline\Pi_{\tau+2d}^{\tau+2d+\al}\right)^n
\ee
as  $l\to\infty$ for all  $i=0,1,\dots, k-1$.
The proof of \reff{conv_all} will be divided into two claims.

{\it Claim 1. The sequence $D^2u^l$ converges in 
$C\left(\overline\Pi_{\tau+2d}^{\tau+2d+\al}\right)^n$ as  $l\to\infty$.}
On the account of \reff{D}, after changing the order of integration,
 we derive the following formula for $\left[D^2u^l\right]_j(x,t)$ on
 $\overline\Pi_{\tau+2d}^{\tau+2d+\al}$ for each $j\le n$:
\begin{eqnarray}\label{D11}
\left[D^2u^l\right]_j(x,t)
=\sum_{k\not=j}^n\sum_{i\not=k}^n
\int_{x_j}^x \int_\eta^x d_{jki}(\xi,\eta,x,t)b_{jk}(\xi,\om_j(\xi))
u_i^l(\eta,\om_k(\eta,\xi,\om_j(\xi))) d \xi d \eta,
\end{eqnarray}
where $x_j$ is given by \reff{*k} and 
$$
d_{jki}(\xi,\eta,x,t)
=d_j(\xi,x,t)d_k(\eta,\xi,\om_j(\xi))b_{ki}(\eta,\om_k(\eta,\xi,\om_j(\xi))).
$$
Note that, due to \reff{cass}, given $j\le n$ and $k\ne j$, the function $b_{jk}$ vanishes for those 
$\xi\in[0,1]$ such that 
 $a_k(\xi,\om_j(\xi))=a_j(\xi,\om_j(\xi))$.

Now, for  fixed $k\not=j$ and  $\eta$, let us change the variables 
\beq\label{change}
\xi\mapsto\theta=\om_k(\eta,\xi,\om_j(\xi)).
\ee
Taking into  account
the equalities
$$
\d_x\om_j(\xi,x,t)=-\frac{1}{a_j(x,t)} \exp \int_\xi^x 
\frac{\d_ta_j(\eta,\om_j(\eta))}{a_j(\eta,\om_j(\eta))^2}\, d\eta,
$$
$$
\d_t\om_j(\xi,x,t)= \exp \int_\xi^x 
\frac{\d_ta_j(\eta,\om_j(\eta))}{a_j(\eta,\om_j(\eta))^2}\, d\eta,
$$
from \reff{change} we get
\begin{eqnarray}\label{theta}
d\theta& = & \left[\d_2\om_k(\eta,\xi,\om_j(\xi))+\d_3\om_k(\eta,\xi,\om_j(\xi))\d_\xi\om_j(\xi)\right]d\xi\nonumber\\
\displaystyle
& = &\frac{a_k(\xi,\om_j(\xi))-a_j(\xi,\om_j(\xi))}{a_j(\xi,\om_j(\xi))a_k(\xi,\om_j(\xi))}
\d_3\om_k(\eta,\xi,\om_j(\xi))\, d\xi,
\end{eqnarray}
where $\d_i$ here and in what follows denotes the partial derivative with respect to the $i$-th  argument.
It follows
from \reff{theta} that \reff{change} is non-degenerate  for all $\xi\in[0,1]$
fulfilling the condition
 $a_k(\xi,\om_j(\xi))\ne a_j(\xi,\om_j(\xi))$. Hence, for given $\eta,\theta,x$, and $t$, 
there exists a unique solution $\xi=\tilde x(\theta,\eta,x,t)$ to the equation
$
\omega_k(\xi,\eta,\theta)= \omega_j(\xi,x,t),
 $
 and we have
$$
\om_k(\tilde x(\theta,\eta,x,t),\eta,\theta)=\om_j(\tilde x(\theta,\eta,x,t),x,t).\nonumber
$$
Changing the variables according to \reff{change},
we  obtain
\begin{eqnarray}\label{after_change}
\lefteqn{
\int_{x_j}^x \int_\eta^x d_{jki}(\xi,\eta,x,t)b_{jk}(\xi,\om_j(\xi))
u_i^l(\eta,\om_k(\eta,\xi,\om_j(\xi))) d \xi d \eta}\nonumber\\
&&=\int_{x_j}^x \int_{\om_j(\eta,x,t)}^{\om_k(\eta,x,t)}
d_{jki}(\tilde x,\eta,x,t)\beta_{jk}(\tilde x,\om_j(\tilde x))
\frac{(a_ka_j)(\tilde x,\om_j(\tilde x))}{\d_3\om_k(\eta,\tilde x,\om_j(\tilde x))}
u_i^l(\eta,\theta) d \theta d \eta,
\end{eqnarray}
where $\beta_{jk}$ are continuous functions fulfilling \reff{cass}. 
Note that $\be_{jk}(x,t)$ are not uniquely defined by \reff{cass} for $(x,t)$ such that 
$a_{j}(x,t)=a_{k}(x,t)$. Nevertheless, the
 left-hand side and, hence, the right-hand
 side of \reff{after_change} do not depend on the choice of $\be_{jk}$.
This easily follows
 from \reff{cass} and \reff{theta}, entailing that $b_{jk}(\xi,\om_j(\xi))=0$ and
   $d\theta=0$ if $a_{j}(\xi,\om_j(\xi))=a_{k}(\xi,\om_k(\xi))$.
Changing the order of integration in the right-hand side of  \reff{after_change},
we rewrite it as follows (where for definiteness we suppose that $j,k\le m$ and $a_k<a_j$, hence $\om_j(\xi)<\om_k(\xi)$; the
other cases are treated similarly):
\begin{eqnarray}\label{change_order}
\lefteqn{
 \int_{\om_j(0)}^{\om_k(0)}\int_{0}^{\tilde\om_j(\theta)}
d_{jki}(\tilde x,\eta,x,t)\beta_{jk}(\tilde x,\om_j(\tilde x))
\frac{(a_ka_j)(\tilde x,\om_j(\tilde x))}{\d_3\om_k(\eta,\tilde x,\om_j(\tilde x))}
u_i^l(\eta,\theta) d \eta d \theta
}\nonumber\\
&&+ \int^t_{\om_k(0)}\int^{\tilde\om_j(\theta)}_{\tilde\om_k(\theta)}
d_{jki}(\tilde x,\eta,x,t)\beta_{jk}(\tilde x,\om_j(\tilde x))
\frac{(a_ka_j)(\tilde x,\om_j(\tilde x))}{\d_3\om_k(\eta,\tilde x,\om_j(\tilde x))}
u_i^l(\eta,\theta) d \eta d \theta,
\end{eqnarray}
where 
$ \tilde\om_s(\tau)=\tilde\om_s(\tau,x,t)$
denotes the inverse of the function from $[0,1]$ to $\R$ taking
$\xi$ to $\tau=\om_s(\xi,x,t)$.
Note that the range of integration in $\theta$ in both integrals 
does not exceed~$d$ in length. This follows from the fact that the time needed to reach the boundary
$x=0$ or $x=1$ from any point $(x,t)\in\overline\Pi$ is not larger than $d$.
The $C\left(\overline\Pi_{\tau+2d}^{\tau+2d+\al}\right)^n$-norm
of the function \reff{change_order} can be estimated from above by
\begin{eqnarray}\label{conv1}
&2d\displaystyle
\max\limits_{x,\xi,\eta\in[0,1]}\max\limits_{t,\theta\in[\tau,\tau+2d+\al]}
\left|d_{jki}(\xi,\eta,x,t))\beta_{jk}(\xi,\theta)
\frac{\left(a_ka_j\right)
(\xi,\theta)}{\d_3\om_k(\eta,\xi,\theta)}\right|
\max\limits_{t\in[\tau,\tau+2d+\al]}\int_0^1|u_i^l(\eta,t)|\,d\eta&\nonumber\\
&\displaystyle\le K\left\|u_i^l\right\|_{C([\tau,\tau+2d+\al],L^2(0,1))},&
\end{eqnarray}
where $K>0$ is a constant that  depends on the coefficients $a$ and $b$ but does not
depend on the function
 $u^l$.  
Thus Claim 1 is proved.

{\it Claim 2. The sequence $DCPDu^l$ converges in 
$C\left(\overline\Pi_{\tau+2d}^{\tau+2d+\al}\right)^n$ as  $l\to\infty$.}
We have
\begin{eqnarray}
\lefteqn{
\displaystyle
[DCPDu^l]_j(x,t)=}\nonumber\\
&&=\sum_{k\not=j}^n
\int_x^{x_j}  d_j(\xi,x,t)  b_{jk}(\xi,\om_j(\xi))
c_k\left(x_k,\xi,\om_j(\xi)\right)
\left(PDu^l\right)_k(\om_k(x_k,\xi,\om_j(\xi)))\,d\xi
\nonumber\\
&&\displaystyle
=\sum_{k\not=j}^n\int_{\om_k(x_k)}^{\om_k(x_k,x_j,\om_j(x_j))} d_{jk}(\tau,x,t) 
\beta_{jk}\left(x_{jk},\om_j(x_{jk})\right)
\frac{(a_ka_j)(x_{jk},\om_j(x_{jk}))}{\d_3\om_k(x_k,x_{jk},\om_j(x_{jk}))}
\left(PDu^l\right)_k(\tau)
\,d\tau
\nonumber\\
&&\displaystyle
=\sum_{k\not=j}^n\int_{\om_k(x_k)}^{\om_k(x_k,x_j,\om_j(x_j))} d_{jk}(\tau,x,t) 
\frac{(\beta_{jk}a_ka_j)(x_{jk},\om_j(x_{jk}))}{\d_3\om_k(x_k,x_{jk},\om_j(x_{jk}))}
\sum\limits_{s=1}^np_{ks}(Du^l)_s(1-x_s,\tau)
\,d\tau\nonumber\\
&&\displaystyle
=\sum_{k\not=j}^n\int_{\om_k(x_k)}^{\om_k(x_k,x_j,\om_j(x_j))} d_{jk}(\tau,x,t) 
\frac{(\beta_{jk}a_ka_j)(x_{jk},\om_j(x_{jk}))}{\d_3\om_k(x_k,x_{jk},\om_j(x_{jk}))}
\nonumber\\
&&\displaystyle
\times
\sum\limits_{s=1}^np_{ks}
\int_{1-x_s}^{x_s} d_s(\xi,1-x_s,\tau)\sum_{r\not=s}(b_{sr}u^l_r)(\xi,\om_s(\xi,1-x_s,\tau))\,d\xi
\,d\tau,\label{DSD}
\end{eqnarray}
where
$$
d_{jk}(\tau,x,t)=d_j(x_{jk}(\tau,x,t),x,t)
c_k\left(x_k,x_{jk}(\tau,x,t),\om_j(x_{jk}(\tau,x,t))\right)
$$
and  $x_{jk}=x_{jk}(\tau,x,t)$ denotes the inverse map to 
$\xi\mapsto\tau=\om_k(x_k,\xi,\om_j(\xi))$  for all  $\xi$ such that
$
a_k(\xi,\om_j(\xi))\ne a_j(\xi,\om_j(\xi)).
$

Write
\begin{eqnarray*}
\displaystyle
d_{jksr}(\xi,\tau,x,t) =d_{jk}(\tau,x,t)
\frac{(\beta_{jk}a_ka_j)(x_{jk},\om_j(x_{jk}))}{\d_3\om_k(x_k,x_{jk},\om_j(x_{jk}))}
p_{ks} d_s(\xi,1-x_s,\tau)b_{sr}(\xi,\om_s(\xi,1-x_s,\tau)).
\end{eqnarray*}
Further we proceed with an arbitrary fixed summand in the right-hand side of
\reff{DSD}. For definiteness, fix arbitrary $k\ne j$, 
$s$ in the range $m+1\le s\le n$, and  $r\ne s$. 
After  applying  Fubini's theorem to the corresponding summand in \reff{DSD}, it reads
\begin{eqnarray}
\lefteqn{
\displaystyle
\int_{0}^{1}\int_{\om_k(x_k)}^{\om_k(x_k,x_j,\om_j(x_j))}
d_{jksr}(\xi,\tau,x,t)
u^l_r(\xi,\om_s(\xi,0,\tau))\,d\tau\,d\xi}\label{one}\\
&&
=\int_{0}^{1}\int_{\om_s(\xi,0,\om_k(x_k))}^{\om_s(\xi,0,\om_k(x_k,x_j,\om_j(x_j)))}
d_{jksr}(\xi,\om_s(0,\xi,\theta),x,t)\d_3\om_s(0,\xi,\theta)
u^l_r(\xi,\theta)\,d\theta\,d\xi.\nonumber
\end{eqnarray}
Here we used the change of variables $\tau\mapsto\theta=\om_s(\xi,0,\tau)$, that is, $\tau=\om_s(0,\xi,\theta)$.
Since
$$
\left|\om_s(\xi,0,\om_k(x_k))-\om_s(\xi,0,\om_k(x_k,x_j,\om_j(x_j)))\right|\le 2d \quad \mbox{for all}\,\,(x,t)\in\overline\Pi,
$$
 the $C\left(\overline\Pi_{\tau+2d}^{\tau+2d+\al}\right)^n$-norm
of the right-hand side of \reff{one}  can be estimated from above by
  \begin{eqnarray}
&\displaystyle
2d\max\limits_{x,\xi\in[0,1]}\max\limits_{t,\theta\in\R}
\left|d_{jksr}(\xi,\om_s(0,\xi,\theta),x,t)\d_3\om_s(0,\xi,\theta)\right|
\max\limits_{\theta\in [\tau,\tau+2d+\al]}\int_0^1|u_r^l(\xi,\theta)|\,d\xi \nonumber\\
&\displaystyle
\le K\left\|u_r^l\right\|_{C([\tau,\tau+2d+\al],L^2(0,1))},\label{two}&
\end{eqnarray}
where $K$ is a constant independent of $u_r^l$. This  implies the desired convergence for 
each summand in \reff{DSD}
and, therefore,
for the whole $\left[DCPDu^l\right](x,t)$. This completes the proof of Claim 2.

The proof of \reff{conv_all}  for $\left[D(CP)^iDu^l\right](x,t)$ with $i=2,\dots,k-1$
follows the same line, since the operator $CP$ is bounded.
 It follows that any  $L^2$-generalized solution
 $u$  to the problem under consideration
is a continuous function for all $t\geq \tau+T_0$, where $T_0=2d$. 
Furthermore, the estimates
 (\ref{conv1}) and \reff{two} imply that
$$
\|u\|_{C\left(\overline\Pi_{\tau+T_0}^{\tau+T_0+\al}\right)^n}\le 
K\left\|u\right\|_{C([\tau,\tau+T_0+\al],L^2(0,1))^n},
$$
where $K$ is a constant depending on  $\alpha$, $a$ and $b$ but 
 not on $\tau$. Using additionally the estimate \reff{eq:apr2}, we come to the inequality
$$
\sup_{\tau+T_0 \le t \le \tau+T_0+\al}\|U(t,\tau)\varphi\|_{C([0,1])^n}\le K\|\varphi\|_{L^2(0,1)^n} , \quad t,\tau\in \R,
$$
where $K$ is a constant independent of $\varphi$. 
Note that, given $c>0$, the constant $K$ can be chosen the same for all 
$b_{jk}$ such that $\max_{j,k}\bigl\|b_{jk}\bigr\|_1<c$.
The proof of the lemma is complete.
\end{proof}

 The following smoothing result is proved in  \cite[Theorem 2.7]{Km}.
\begin{lem}\cite{Km}\label{lem:d1}
Suppose that the conditions  (\ref{ss44}), (\ref{ss4}), (\ref{ss8}), \reff{U-smooth}, (\ref{cass})    are fulfilled. Then the evolution family 
$\{U(t,\tau)\}_{t\ge \tau}$  generated by the problem (\ref{ss10}), (\ref{ss5})
 is
  smoothing from $Y_0$ to 
$C^1\left([0,1]\right)^n$.
\end{lem}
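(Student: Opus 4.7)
The plan is to upgrade the scheme used in the proof of Lemma~\ref{lem:d} from $C$-regularity to $C^1$-regularity by iterating the representation formula more deeply and by invoking the Levy condition \reff{cass} not merely as a change of variables, but also to convert first derivatives of $u$ along characteristics into boundary values via integration by parts. For $\vphi\in Y_0$ the $L^2$-generalized solution $u=U(\cdot,\tau)\vphi$ is already continuous on $\overline\Pi_\tau$ by Lemma~\ref{lem:d} and satisfies the zero-order compatibility conditions pointwise, so one only needs to produce continuous first derivatives for $t\ge\tau+T_1$, with $T_1>0$ depending on the coefficients but not on $\tau$ or $\vphi$.

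First I would iterate the operator equation $u=Qu+Du$ exactly as in the proof of Lemma~\ref{lem:d} to arrive, on $\overline\Pi_{\tau+2d}$, at the representation
\beq\label{proposal-rep}
u = \sum_{i=0}^{k-1}(CP)^{i}D\sum_{j=0}^{k-1}(CP)^{j}Du.
\ee
Every summand is a multi-integral of components of $u$ along compositions of characteristics $\om_j$ against smooth kernels built from $a_j$, $b_{jk}$, and $\beta_{jk}$. Formally differentiating \reff{proposal-rep} in $x$ or $t$, each $\d_x$ or $\d_t$ acting on an outer $D$ yields a boundary term (continuous on $\overline\Pi_{\tau+2d}$ because $u$ is) plus an integral of $b_{jk}\d_xu_k$ or $b_{jk}\d_tu_k$ along the $j$-th characteristic; the analogous calculation for the inner $D$ produces terms containing first derivatives of $u_k$ along a secondary characteristic.

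To handle these bad factors, I would use \reff{cass} to write $b_{jk}=\beta_{jk}(a_k-a_j)$ and exploit the PDE for $u_k$ to observe that, along the $j$-th characteristic,
$$
(a_k-a_j)\d_xu_k=-a_j\frac{d}{d\xi}\bigl[u_k(\xi,\om_j(\xi))\bigr]-\sum_{l=1}^{n}b_{kl}u_l,
$$
so that the offending derivative becomes a total $\xi$-derivative plus zero-order terms in $u$. Integrating by parts in $\xi$ replaces the total derivative by boundary values of $u_k$, which are continuous because $\vphi\in Y_0$ and are consistent with the reflection condition \reff{ss5}. The same manoeuvre, applied to the inner $D$ after the change of variables $\xi\mapsto\theta=\om_k(\eta,\xi,\om_j(\xi))$ used in Lemma~\ref{lem:d}, turns the remaining bulk integrals into continuous functions of $(x,t)$.

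The main obstacle will be the bookkeeping: each differentiation spawns many boundary and interior terms, and one must verify that the boundary contributions from the successive integrations by parts are individually continuous (rather than merely continuous in aggregate) and that they glue correctly across $x=0$ and $x=1$ through \reff{ss5} and the zero-order compatibility encoded in $Y_0$. This is precisely the content of \cite[Theorem~2.7]{Km}, whose argument adapts to the present non-autonomous decoupled setting once the curvilinear characteristics $\om_j$ from \reff{char} replace the straight lines used there.
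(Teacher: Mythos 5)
The paper offers no proof of this lemma at all --- it is imported verbatim from \cite[Theorem 2.7]{Km} --- and your sketch is a faithful reconstruction of the argument used there and in the proof of Lemma \ref{lem:d}: iterate $u=Qu+Du$ past the extinction time, then use the Levy condition \reff{cass} to rewrite $b_{jk}\,\d u_k$ along the $j$-th characteristic as a total $\xi$-derivative plus zero-order terms and integrate by parts, so that only boundary traces of $u$ and integrals of $u$ against $C^1$ kernels survive. Your identity $(a_k-a_j)\d_xu_k=-a_j\frac{d}{d\xi}\bigl[u_k(\xi,\om_j(\xi))\bigr]-\sum_l b_{kl}u_l$ checks out, and the remaining work (running the computation through the classical approximants $u^l$ and through the $(CP)^i$ factors, where a single application of $D$ already suffices for the $C\to C^1$ step) is precisely what the cited theorem supplies.
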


Theorem \ref{main_smoothing}
follows from Lemmas  \ref{lem:d} and  \ref{lem:d1}. 
The  smoothing time  $T$, after which the $L^2$-generalized solution to the problem
(\ref{ss10}), (\ref{ss5}), \reff{ss11}  becomes $C^1$-smooth is equal to
 $T_0+T_1$, where $T_0$ is the smoothing time from $L^2$- to $C$-regularity 
ensured by Lemma \ref{lem:d}
and $T_1$ is the smoothing time from $C$- to $C^1$-regularity 
ensured by \cite[Theorem 2.7]{Km}.
Furthermore, for given $\varphi\in L^2(0,1)^n$ and $\al>0$,  the following bound is fulfilled:
\begin{equation}\label{eq:nnn2}
\sup_{\tau+T \le t \le \tau+T+\al}\|U(t,\tau)\varphi\|_{C^1([0,1])^n}\le K\|\varphi\|_{L^2(0,1)^n} , 
\quad t,\tau\in \R,
\end{equation}
where $K$ is a constant that depends on  $\alpha$, $a$, $b$ and $p_{jk}$ $(j,k \le n)$ but 
 not on $\tau$ and $\vphi$.  Moreover, given $c>0$, 
the constant $K$ can be chosen the same for all 
$b_{jk}$ such that $\max_{j,k}\bigl\|b_{jk}\bigr\|_1<c$.

\section{Proof of the perturbation theorem}
\renewcommand{\theequation}{{\thesection}.\arabic{equation}}
\setcounter{equation}{0}
\subsection{Abstract setting}

Let us write down  the unperturbed and the perturbed  problems 
(\ref{ss2}), (\ref{ss11}), (\ref{ss5}) and (\ref{ss7}), (\ref{ss11}), (\ref{ss5}), 
respectively, in the form of abstract evolution equations in the Hilbert space  $L^2(0,1)^n$.
To this end, denote $v(t)=(u_{1}(0,t),\dots u_m(0,t)$, $u_{m+1}(1,t),\dots u_n(1,t))$ and   one-parameter families of operators
$A(t)$ and $B(t)$ from $L^2(0,1)^n$
to $L^2(0,1)^n$ for each $t\in \R$,
defined by
\beq\label{A}
\begin{array}{ll}
\displaystyle \left(A(t)u\right)(x)=\left(-a(x,t)\frac{\d}{\d x} - b_d(x,t)\right)u,\\ [3mm]
\left(B(t)u\right)(x)=\left( - \tilde b(x,t)\right)u,
\end{array}
\ee
where the domains of definition are given by 
$$
\begin{array}{ll}
\displaystyle
 D(A(t))=\{u\in L^2(0,1)^n\,:\,\d_xu\in L^2(0,1)^n,\,v(t)=
(Pu)(t)\},\\ [2mm]D(B(t))= L^2(0,1)^n,
\end{array}
$$
for the operator   $P$  given by  (\ref{ss12}). Note that $D(A(t)+B(t))=D(A(t))$.

In this notation, the {\it unperturbed problem} (\ref{ss2}), (\ref{ss11}), (\ref{ss5}) reads
\beq\label{unperturb}
\frac{d}{dt}u=A(t)u , \quad u(\tau)=\varphi\in L^2(0,1)^n,
\ee
while the {\it perturbed problem}  (\ref{ss7}), (\ref{ss11}), (\ref{ss5})  reads
\beq\label{perturb}
\frac{d}{dt}u=(A(t)+B(t))u, \quad u(\tau)=\varphi\in L^2(0,1)^n.
\ee
Accordingly to the above notation,   $\{U(t,\tau)\}_{t\ge\tau}$ and $\{\tilde{U}(t,\tau)\}_{t\ge\tau}$ will  denote evolution 
families on $L^2(0,1)^n$ generated by the problems (\ref{unperturb}) and 
(\ref{perturb}), respectively.

\subsection{Proof of Theorem \ref{hyper}}\label{sec:perturb}

\paragraph{Part $(\io)$}
 Consider  the 
abstract formulations  \reff{unperturb} and  \reff{perturb} of the unperturbed and 
 perturbed problems 
(\ref{ss2}), (\ref{ss11}), (\ref{ss5}) and (\ref{ss7}), (\ref{ss11}), (\ref{ss5}), 
respectively. 
By Theorem \ref{evol}, the problems
 \reff{unperturb} and \reff{perturb}   generate exponentially bounded evolution families 
 $\{U(t,\tau)\}_{t\ge\tau}$ and $\{\tilde{U}(t,\tau)\}_{t\ge\tau}$, respectively. 
The condition \reff{U-smooth} implies that
\beq\label{stab}
U(t,\tau)\varphi=0 \quad \mbox{ for all} \,\, t\geq\tau+d\quad \mbox{and}\,\, \varphi\in L^2(0,1)^n.
\ee
 Set  
$$
\be=\max_{j,k}\bigl\|\tilde b_{jk}\bigr\|_\infty.
$$
By Lemma \reff{nn0}, we have  bounds 
\begin{equation}\label{ll1}
\sup_{0\le t-\tau \le d}\|U(t,\tau)\|_{\LL(L^2(0,1)^n)}\le C_{U},\quad
\sup_{0\le t-\tau \le d}\|\tilde{U}(t,\tau)\|_{\LL(L^2(0,1)^n)}\le C_{\tilde{U}}
\end{equation}
for some positive constants $C_{U}$ and $C_{\tilde{U}}$ not depending on $\tau$.

Fix $\tau\in\R$ and $\varphi\in C_0^\infty([0,1])^n$. Then
 $U(t,\tau)\varphi$ and $\tilde U(t,\tau)\varphi$ are classical
 solutions to the problems
\reff{unperturb} and \reff{perturb}, respectively. 
This allows us to apply the variation of constants formula (see, e.g. \cite{Paz}),
which gives us the equation
\beq\label{Pazy}
\tilde{ U}(t,\tau)\varphi=U(t,\tau)\varphi+\int_{\tau}^{t}U(t,s)B(s)
\tilde {U}(s,\tau)\varphi \, ds\quad  \mbox{ for }\,\, t\geq\tau,
\ee
where $B(t)$ is determined by \reff{A}. Our aim is to  prove  the bound (\ref{bound1}).
For $t>\tau+d$, the formula (\ref{Pazy})  reads
$$
\tilde U(t,\tau)\varphi=U(t,\tau)\varphi+\int_{\tau}^{t-d}U(t,s)B(s)\tilde U(s,\tau) \varphi\, ds +
\int_{t-d}^{t}U(t,s)B(s)\tilde U(s,\tau) \varphi\, ds.
$$
By  (\ref{stab}), the first two summands in the right-hand side vanish, and we get
\begin{equation}\label{eq:nn16}
\tilde U(t,\tau)\varphi=\int_{t-d}^{t}U(t,s)B(s)\tilde U(s,\tau)\varphi\, ds\quad  \mbox{ for }\,\,  t>\tau+d.
\end{equation}
Write $Z(t)=\|\tilde U(t,\tau)\varphi\|_{L^2(0,1)^n}$. Due to (\ref{ll1}) and (\ref{eq:nn16}), 
\begin{equation}\label{ll2}
Z(t)\le C_{\tilde{U}}\|\varphi\|_{L^2(0,1)^n}\quad  \mbox{ for }\,\,  \tau\le t\le \tau+d,
\end{equation}
\begin{equation}\label{ll3}
Z(t)\le C_{U}\be \int _{t-d}^{t} Z(s) ds\quad  \mbox{ for }\,\, \tau+d< t.
\end{equation}

\begin{lem}\label{lem:lll1}
Suppose that the function $Z(t)$ fulfills the estimates (\ref{ll2}), (\ref{ll3}). 
Then for any  $\ga>0$ there exists  $\eps>0$  
such that 
\beq\label{lll4}
Z(t)\le 
M e^{-\ga(t-\tau)}\|\varphi\|_{L^2(0,1)^n}\quad  \mbox{ for }\,\,  t\ge\tau,
\ee
for all  $\be\in (0,\varepsilon)$, where $M=C_{\tilde{U}}e^{(\ga+C_{U}\eps)d}$.
\end{lem}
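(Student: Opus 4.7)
The plan is to extract exponential decay of $Z$ from the two-piece estimate (\ref{ll2})--(\ref{ll3}) by passing to the rescaled function $f(t) := e^{\ga(t-\tau)} Z(t)$ and handling the delay-type integral inequality by a running-supremum argument. First I would translate the hypotheses. From (\ref{ll2}),
\[
f(t) \le C_{\tilde U}\, e^{\ga d}\|\vphi\|_{L^2(0,1)^n} \quad\mbox{for}\ \tau \le t \le \tau+d,
\]
while multiplying (\ref{ll3}) by $e^{\ga(t-\tau)}$ and using $t-s \le d$ on the range of integration yields
\[
f(t) \le C_U \be\, e^{\ga d} \int_{t-d}^{t} f(s)\,ds \quad\mbox{for}\ t > \tau+d.
\]

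Next I would introduce the running supremum $N(t) := \sup_{\tau \le s \le t} f(s)$, which is finite on every compact interval because $\|\tilde U(t,\tau)\|_{\LL(L^2(0,1)^n)}$ is exponentially bounded by Theorem~\ref{evol}; in particular $f$ itself grows at most exponentially, so $N$ is a well-defined real-valued nondecreasing function. From the bound on $[\tau,\tau+d]$ one has $N(\tau+d) \le C_{\tilde U}\, e^{\ga d}\|\vphi\|_{L^2(0,1)^n}$, and for $t > \tau+d$ the transformed (\ref{ll3}) gives $f(t) \le C_U\be d\, e^{\ga d}\, N(t)$, so taking the supremum over $[\tau,t]$,
\[
N(t) \le \max\bigl(C_{\tilde U}\, e^{\ga d}\|\vphi\|_{L^2(0,1)^n},\ C_U\be d\, e^{\ga d}\, N(t)\bigr).
\]

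I would then fix $\eps>0$ small enough that $C_U \eps d\, e^{\ga d} < 1$. For every $\be \in (0,\eps)$ the second entry in the maximum is strictly less than $N(t)$, which forces $N(t) \le C_{\tilde U}\, e^{\ga d}\|\vphi\|_{L^2(0,1)^n}$. Undoing the substitution yields $Z(t) \le C_{\tilde U}\, e^{\ga d}e^{-\ga(t-\tau)}\|\vphi\|_{L^2(0,1)^n}$, and absorbing the harmless factor $e^{C_U\eps d}\ge 1$ into the constant produces exactly the value $M = C_{\tilde U}\, e^{(\ga+C_U\eps)d}$ from the statement.

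The main obstacle, and the reason the statement requires $\be$ to be small, is the delayed integral structure of (\ref{ll3}): because the integral runs only over $[t-d,t]$ rather than $[\tau,t]$, the classical Gronwall inequality does not apply directly on $[\tau,\infty)$. The running-supremum device circumvents this by converting the delayed inequality into the linear inequality $(1 - C_U\be d\, e^{\ga d})\,N(t) \le C_{\tilde U}\, e^{\ga d}\|\vphi\|_{L^2(0,1)^n}$, which is invertible precisely under the smallness condition that fixes the admissible range of $\eps$.
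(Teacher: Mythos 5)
Your argument is correct, and it reaches the stated conclusion (in fact with the slightly sharper constant $C_{\tilde U}e^{\ga d}\le M$), but it is a genuinely different route from the paper's. The paper proceeds by induction over the intervals $kd<t-\tau\le(k+1)d$, applying Gronwall's inequality on each one to obtain the explicit bound $Z(t)\le C_{\tilde U}(C_U\be d)^k e^{C_U\be(t-\tau-d)}\|\vphi\|_{L^2(0,1)^n}$, and then fixes $\eps$ as the solution of the transcendental equation $\frac1d\log(C_U\eps d)+C_U\eps=-\ga$ (equation \reff{*2}), which is what produces the particular value of $M$ in the statement; the authors point out that this equation mirrors the quasipolynomial stability conditions appearing in their spectral examples. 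Your rescaling $f=e^{\ga(t-\tau)}Z$ combined with the running supremum $N(t)$ collapses the whole induction into the single absorption inequality $N(t)\le\max\bigl(C_{\tilde U}e^{\ga d}\|\vphi\|_{L^2(0,1)^n},\,C_U\be d\,e^{\ga d}N(t)\bigr)$, with the cleaner sufficient smallness condition $C_U\eps d\,e^{\ga d}<1$; one checks that the paper's $\eps$ satisfies $C_U\eps d\,e^{\ga d}=e^{-C_U\eps d}<1$, so your admissible range contains theirs. What your approach buys is brevity and a marginally better constant; what it requires in exchange is the a priori finiteness of $N(t)$ on compact intervals, without which the absorption step $N(t)\le qN(t)$ yields no contradiction --- you correctly supply this from the exponential boundedness of $\tilde U(t,\tau)$ guaranteed by Theorem \reff{evol} (the paper's interval-by-interval Gronwall argument needs the analogous local integrability of $Z$, but only one interval at a time). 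What the paper's approach buys is the explicit geometric decay factor $(C_U\be d)^k$ per delay interval, which makes the mechanism of superstability-plus-perturbation visible and ties the smallness threshold to the characteristic equations analyzed in Section 3.
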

\begin{proof}
Fix an arbitrary $\ga>0$. If  $\tau < t\le \tau+d$, then the desired estimate 
\reff{lll4} follows from \reff{ll2}. Indeed, 
\begin{eqnarray*}
&Z(t)\le  C_{\tilde U}\|\varphi\|_{L^2(0,1)^n}=C_{\tilde U}
e^{\ga(t-\tau)}e^{-\ga(t-\tau)}\|\varphi\|_{L^2(0,1)^n}\le Me^{-\ga(t-\tau)}\|\varphi\|_{L^2(0,1)^n}&\\ &  \hskip10cm \,\, \mbox{ for }\,\, 0 < t-\tau\le d.&
\end{eqnarray*}

If $\tau+d < t\le \tau+2d$, then  $\tau < t-d\le \tau+d<t$ and,
 due to  (\ref{ll2}) and (\ref{ll3}),
\begin{eqnarray*}
\lefteqn{
Z(t)\le  C_{U}\be \int _{t-d}^{t} Z(s) ds =   C_{U}\be
\left[\int_{t-d}^{\tau+d} Z(s) ds+\int _{\tau+d}^{t} Z(s) ds\right]}\\  
&&
\le
 C_{U}\be
\left[C_{\tilde{U}}d\|\varphi\|_{L^2(0,1)^n}+\int _{\tau+d}^{t} Z(s) ds\right]\\  
&&
\le C_{U}C_{\tilde{U}}\be  d\|\varphi\|_{L^2(0,1)^n}+C_{U}\be\int _{\tau+d}^{t} Z(s) ds.
\end{eqnarray*}
By Gronwall's argument, we obtain
\beq\label{lll5}
Z(t)\le C_{U}C_{\tilde{U}}\be  d
e^{C_{U}\be(t-\tau-d)}\|\varphi\|_{L^2(0,1)^n}\quad  \mbox{ for }\,\,  d < t-\tau\le 2d.
\ee

If  $\tau+2d<t\le \tau+3d$, then $\tau+d<t-d\le \tau+2d<t$ and, on the account of
(\ref{ll3}) and  (\ref{lll5}), we come to the inequality 
\begin{eqnarray*}
\lefteqn{
Z(t)\le  C_{U}\be \int _{t-d}^{t} Z(s) ds =   C_{U}\be
\left[\int_{t-d}^{\tau+2d} Z(s) ds+\int _{\tau+2d}^{t} Z(s) ds\right]}\\  
&&
\le   
 C_{\tilde{U}}(C_{U}\be  d)^2 e^{C_{U}\be d} \|\varphi\|_{L^2(0,1)^n}
+C_{U}\be\int _{\tau+2d}^{t} Z(s) ds.
\end{eqnarray*}
Again, the Gronwall's argument gives 
$$
Z(t)\le  C_{\tilde{U}} (C_{U}\be d)^2
e^{C_{U}\be(t-\tau-d)}\|\varphi\|_{L^2(0,1)^n}\quad  \mbox{ for }\,\,  2d < t-\tau\le 3d.
$$

Proceeding further by induction, on the $k$-th step we obtain an estimate for the function 
 $Z(t)$, namely
$$
Z(t)\le C_{\tilde{U}}(C_{U}\be d)^k 
e^{C_{U}\be(t-\tau-d)}\|\varphi\|_{L^2(0,1)^n}\quad  \mbox{ for }\,\, 
kd < t-\tau\le (k+1)d.
$$
Fix $\eps$  to  fulfill the equality 
\beq\label{*2}
\frac{1}{d}\log(C_{U}\eps d)+C_{U}\eps=-\ga.
\ee
Note that $\log(C_{U}\eps d)<0$. Since $t-\tau-d\le kd$ and $\be<\eps$, we get
$$
\begin{array}{rcl}
Z(t)&\le& C_{\tilde U}
e^{kd\frac{1}{d}\log(C_{U}\eps d)+C_{U}\eps(t-\tau)}
 \|\varphi\|_{L^2(0,1)^n}\nonumber\\ [3mm] &\le &
C_{\tilde U}e^{(t-\tau-d)\frac{1}{d}
\log(C_{U}\eps d)+C_{U}\eps(t-\tau)}
 \|\varphi\|_{L^2(0,1)^n}\nonumber\\ [3mm] &= &
C_{\tilde U}e^{-
\log(C_{U}\eps d)}e^{-\ga(t-\tau)}
 \|\varphi\|_{L^2(0,1)^n}\nonumber\\ [3mm] &= &
 C_{\tilde U}
e^{(\ga+C_{U}\eps)d}e^{-\ga (t-\tau)} \|\varphi\|_{L^2(0,1)^n}\quad  \mbox{ for }\,\,  kd < t-\tau\le (k+1)d,
\end{array}
$$
where the last equality holds by  \reff{*2}.
Since $k\in\N$ is arbitrary,  the estimate 
(\ref{lll4})  follows. The proof of the lemma is  complete. 
\end{proof}

 Lemma \ref{lem:lll1} gives the estimate 
$$
\|\tilde U(t,\tau)\varphi\|_{L^2(0,1)^n}\le Me^{-\gamma(t-\tau)}\|\varphi \|_{L^2(0,1)^n} \quad 
\mbox{ for }\,\,t\ge \tau,
$$
for all $\varphi\in C_0^\infty([0,1])^n$. Since the space $C_0^\infty([0,1])^n$ is dense in
 $L^2(0,1)^n$, the same estimate is true for all $\varphi\in L^2(0,1)^n$. 
This entails (\ref{bound1}), as desired.
The proof of part $(\io)$ is complete.

\paragraph{Part $(\io\io)$}
Fix an arbitrary $\ga>0$. Let $\eps$ be a positive real satisfying the estimate  \reff{bound1} for all 
 $\tilde b_{jk}$
such that $\max_{j,k}\bigl\|\tilde b_{jk}\bigr\|_\infty<\eps$.

Recall that all  assumptions of Theorem~\ref{main_smoothing} are stable with respect to certain perturbations of $b_{jk}$. 
Specifically, the condition \reff{U-smooth} is stable, because it  depends only on the diagonal part of $b$, while
the perturbations involve only the non-diagonal part of $b$. The condition \reff{cass} is true by the 
corresponding assumption 
 of Theorem  \ref{hyper} $(\io\io)$. 

 Therefore, by  Theorem~\ref{main_smoothing}, 
$\tilde U(t,\tau)$ is smoothing  from  $L^2(0,1)^n $ to $C^1([0,1])^n$.
More specifically,
 for any 
 $\varphi \in L^2(0,1)^n$ the  function $\tilde U(t,\tau)\varphi$
 belongs to $C^1([0,1])^n$  whenever $t \ge \tau+T$ for some $T>0$. 
Furthermore,  the estimate   (\ref{eq:nnn2})
implies that
\begin{equation}\label{eq:nn11}
\|\tilde U(\tau+T,\tau)\varphi\|_{C^1([0,1])^n}\le K_0\|\varphi\|_{L^2(0,1)^n}  \quad  \mbox{ for all } 
\tau\in \R
\end{equation}
for some $K_0\ge 1$ not depending on $\varphi$ and $\tau$.
 Moreover, the constant $K_0$ can be chosen the same for all $\tilde b_{jk}$
such that $\max_{j,k}\bigl\|\tilde b_{jk}\bigr\|_1<\eps$.

Suppose that  
 $\varphi \in C^1([0,1])^n$ satisfies the zero-order and the first-order 
compatibility conditions 
 (\ref{eq:nl1}) and (\ref{eq:nl2}) for the perturbed problem
 \reff{perturb}. By  
Theorem \ref{km},
we have
\begin{equation}\label{eq:nn12}
\|\tilde U(t,\tau)\varphi\|_{C^1([0,1])^n}\le K_T \|\varphi\|_{C^1([0,1])^n} \quad \mbox{ for all } \tau \le t \le \tau+2T,
\end{equation}
where $K_T=K_1e^{2T\omega_1}>1$ for the constants $K_1$ and $\om_1$  as in
 (\ref{eq:apr5}). 
Again, the constant $K_1$ and, hence $K_T$ can be chosen the same for all $\tilde b_{jk}$
such that $\max_{j,k}\bigl\|\tilde b_{jk}\bigr\|_1<\eps$.

Fix an arbitrary $\tau \in \R$. To prove the estimate (\ref{bound2}), 
it suffices to show that 
$$
\|\tilde U(t,\tau)\varphi \|_{C^1([0,1])^n}\le 
M_1 e^{-\ga(t-\tau)}\|\varphi\|_{L^2(0,1)^n}\quad  \mbox{ for all } t\ge\tau+2T
$$
for all $\varphi \in L^2(0,1)^n$ and  some  $M_1\ge M$, where $M$ fulfills \reff{bound1}.

If $t \ge \tau +2T$, then there is  $k\ge 2$ such that 
$
\tau+kT \le t <\tau+(k+1)T.
$
Then
$
t-\tau-(k-1)T\le 2T.
$
Taking  (\ref{eq:nn12}) into account, we see that
\begin{equation}\label{eq:nn22}
\begin{array}{rcl}
\|\tilde U(t,\tau)\varphi\|_{C^1([0,1])^n}&=&
\|\tilde U(t,\tau+(k-1)T)\tilde U(\tau+(k-1)T,\tau)\varphi\|_{C^1([0,1])^n}\\
& \le &
K_T \|\tilde U(\tau+(k-1)T,\tau)\varphi\|_{C^1([0,1])^n}.
\end{array}
\end{equation}
The  estimates  (\ref{eq:nn11}) and (\ref{bound1}) imply that for $k\ge 2$
\beq\label{bbb4}\begin{array}{ll}
 \|\tilde U(\tau+(k-1)T,\tau)\varphi\|_{C^1([0,1])^n}\\\quad
= 
\|\tilde U(\tau+(k-1)T,\tau+(k-2)T )\tilde U(\tau+(k-2)T,\tau)\varphi\|_{C^1([0,1])^n}\\\quad
\le K_0 \|\tilde U(\tau+(k-2)T,\tau)\varphi\|_{L^2(0,1)^n}\\\quad
\le K_0M e^{-\gamma (k-2)T}\|\varphi\|_{L^2(0,1)^n}.
\end{array}
 \ee 
Finally, combining the estimates (\ref{eq:nn22}) and  (\ref{bbb4}),  we get
$$\begin{array}{rcl}
\|\tilde U(t,\tau)\varphi\|_{C^1([0,1])^n}
\le K_T K_0 M e^{-\gamma (k-2)T}\|\varphi\|_{L^2(0,1)^n}&=& K_T K_0 M e^{3\gamma T}e^{-\gamma (k+1)T}\|\varphi\|_{L^2(0,1)^n}\\
& \le& M_1 e^{- \gamma (t-\tau)}\|\varphi\|_{L^2(0,1)^n},
\end{array}
$$
where  $M_1=K_T K_0 M e^{3\gamma T}>M$ and $t \ge \tau +2T$.  

The proof of part  $(\io\io)$ of Theorem  \ref{hyper} is complete.

%\section*{Acknowledgments}
 %Irina Kmit was supported by the 
%Volkswagen Stiftung Project ``Modeling, Analysis, and Approximation Theory toward Application%s in 
%Tomography and Inverse Problems''.
%Natalya Lyul'ko was supported by the Presidium of the Russian Academy of Sciences 
(%basic research program  No. I. 5P). The authors are thankful to the anonymous referees for the%ir valuable  comments.


\begin{thebibliography}{10}

\bibitem{Aboliny}
V.E. Abolinya, A.D. Myshkis,  A mixed problem for an almost linear hyperbolic system on the plane,
{\it Matematicheskij Sbornik} {\bf 50(92)}(4) (1960), 423--442. 

\bibitem{bastin}G. Bastin,  J.-M. Coron, {\it Stability and Boundary Stabilization of 1-D Hyperbolic Systems},
Progress in Nonlinear Differential Equations and Their Applications {\bf 88}, 
Birkh\"auser, 2016.


\bibitem{bal99} A.V. Balakrishnan, On superstability of semigroups. Systems modelling and optimization, in: M.P.Polis et al. (Eds.), Proceedings of the 18th
IFIP TC7 Conference on System Modelling and Optimization, CRC, Research Notes in Mathematics, Chapman and Hall (1999),  12--19.

\bibitem{bal105} A.V. Balakrishnan, Superstability of systems,
{\it  Applied Mathematics and Computation} {\bf 164(2)} (2005), 321--326.


\bibitem{chen} J.-H. Chen, W.-Y. Lu,   Perturbation of nilpotent semigroups and application to heat exchanger equations, {\it Applied Mathematics Letters} {\bf 24} (2011), 1698--1701.

\bibitem{Coron} J.-M. Coron, G. Bastin, Dissipative boundary conditions 
for one-dimensional quasilinear hyperbolic systems: Lyapunov stability
for the $C1$-norm, {\it SIAM J. Control Optim.} {\bf 53(3)} (2015), 1464--1483.

\bibitem{Cox} S. Cox, E. L. Zuazua,  The rate at which energy decays in a string damped at one end,
{\it  Indiana Univ. Math. J.} {\bf 44(2)} (1995), 545--573.

\bibitem{cr13}  D. Creutz, M. Mazo Jr., C. Preda, Superstability and finite time  extinction  
for $C_0$-semigroups, (2013). {\it  E-print}: https://arxiv.org/abs/0907.4812.

\bibitem{els} L.E. Elsgolts, S.B. Norkin.
An Introduction to the Theory and Application of Differential
Equations with Deviating Arguments. Academic Press, New York, 1973.


\bibitem{Elt} N.A.~\"Eltysheva,  On qualitative properties of solutions to
some hyperbolic systems on the plane,
{\it Matematicheskij Sbornik} {\bf 135(2)} (1988), 186--209.



 \bibitem{god} S.K. Godunov, {\it Equations of Mathematical Physics}, Moscow: Nauka,   2nd ed., 1979 
(Russian).

\bibitem{gugat} M. Gugat, Boundary feedback stabilization of the telegraph equation: Decay rates for vanishing damping term,
{\it Systems and Control Letters} {\bf 66} (2014), 72--84. 

\bibitem{gugat1} M. Gugat, {\it Optimal Boundary Control and Boundary Stabilization of Hyperbolic Systems},
Basel : Birkh\"auser, 2015.

\bibitem{hp57} E. Hille, R. Phillips,  {\it Functional  analysis and semi-groups},
Providence, 1957.

\bibitem{ijdsde}
I. Kmit,  Classical solvability of nonlinear initial-boundary problems for
first-order hyperbolic systems, {\it Intern. J. Dynamic Systems Different.
Equat.} {\bf 1(3)} (2008), 191--195.

\bibitem{kmit} I. Kmit,  Smoothing effect and Fredholm property for 
first order hyperbolic PDEs,   
In: {\it Operator Theory: Advances and Applications}, Basel: Birkh\"auser
 {\bf 231} (2013), 219--238.

\bibitem{Km} I. Kmit, Smoothing solutions to initial-boundary problems  for first-order
hyperbolic systems, {\it Applicable Analysis} {\bf 90(11)} (2011), 1609--1634.

\bibitem{Km1} I. Kmit, G. H\"ormann, Systems with singular non-local boundary conditions: Reflection of singularities and delta waves, {\it J. Anal. Appl.} {\bf 20(3)} (2001), 637--659.

\bibitem{Kom} V. Komornik, Rapid boundary stabilization of the wave equation,
{\it  SIAM J. Control Optim.} {\bf 29} (1991), 197--208.


\bibitem{kr} M.G. Krein, I.C. Gohberg, {\it Theory and applications of Volterra operators in Hilbert space}, 
American Math Society, 1970.

\bibitem{LavLyu} M.M.~Lavrent'ev Jr., N.A.~Lyul'ko,  Increasing
smoothness of solutions to some hyperbolic problems,
{\it Siberian Math. J.}
 {\bf 38(1)} (1997), 92--105.


\bibitem{Lyu} N.A. Lyul'ko,
Increasing smoothness of solutions to a hyperbolic system on the plane with delay in the boundary 
conditions. 
{\it Siberian Math. J.} {\bf 49(6)} (2008), 1333--1350.

\bibitem{lum01} G. Lumer, On the growth of the resolvents for an explicit class  of superstable  semigroups.
 Ulmer Seminare  uber Funktionalanalysis  and  Differentialgleichungen,
{\it Appl. Analysis}, Univ. Ulm {\bf 6} (2002), 253--258.

\bibitem{Majda}  A. Majda,
Disappearing solutions for the dissipative wave equation,
{\it Indiana Univ. Math. J.} {\bf 24} (1975), 1119--1133.


\bibitem{lakra} 
L. Pavel,
Classical solutions in Sobolev spaces for a class of hyperbolic Lotka--Volterra systems,
{\it SIAM J. Control Optim.} {\bf 51(3)} (2013), 2132--2151. 


\bibitem{pavel} L. Pavel, L. Chang,
Lyapunov-based boundary control for a class of hyperbolic Lotka-
Volterra systems, {\it IEEE Trans. Automat. Control} {\bf 7} (2012), 701--714.

\bibitem{Paz} 
A. Pazy,
{\it Semigroups of operators and applications to partial differential equations},
Springer-Verlag, Berlin,  1983. 

\bibitem{perroll} V. Perrollaz, L. Rosier, Finite-Time Stabilization of $2\times2$ Hyperbolic Systems on Tree-Shaped Networks, {\it  SIAM Journal on Control and Optimization} {\bf 52(1)}
(2014), 143--163.


\bibitem{rw95} F. R\"abiger and M. Wolff,   Superstable semigroups of operators,
{\it  Indagationes Mathematicae} {\bf 6} (1995), 481--494.

\bibitem{Rom} 
R.K. Romanovskiy, E.V. Vorobyova, E.N. Stratilatova,
{\it The Riemann method for hyperbolic systems},
Novosibirsk: Nauka,  2007 (Russian).



\bibitem{slx13} Y. Shang, D. Liu, G. Xu, Super-stability and the spectrum of one-dimensional wave equations  on general feedback controlled networks, {\it  IMA Journal of  Mathematical Control and Information} {\bf 31(1)} (2014), 73--99. 


\bibitem{udw05} F.E. Udwadia, Boundary control, quiet boundaries, super-stability and super-instability, 
{\it  Applied Mathematics and Computation} {\bf 164(2)} (2005), 327--349.

\bibitem{udw12} F.E. Udwadia, On the longitudinal  vibrations of a bar with  viscous  boundaries: Super-stability, super-instability 
and loss damping,
{\it  Intern. J. of Engineering Science} {\bf 50(1)} (2012), 79-100.


\bibitem{Zel} T.I. Zelenyak, 
On stationary solutions of mixed problems relating to the study of certain chemical processes, 
{\it Differ. Equations} {\bf 2(2)} (1966), 205--213. 

\bibitem{Zel1} T.I. Zelenyak, On the question of stability of mixed problems for a quasi-linear equation
 (Russian), 
{\it Differ. Equations} {\bf 3(1)} (1967), 19--29. 




\end{thebibliography}
\end{document}